\newtheorem{thm}{Theorem}[section]
\newtheorem{prop}{Proposition}[section]
\newtheorem{lem}{Lemma}[section]
\newtheorem{rem}{Remark}[section]
\numberwithin{equation}{section}
\begin{document}

\begin{frontmatter}

\title{$p$ and $hp$ Spectral Element Methods for Elliptic Boundary Layer Problems}
\author[label1]{Akhlaq Husain}
\ead{ahusain10@jmi.ac.in}
\address[label1]{Department of Mathematics, Faculty of Sciences, Jamia Millia Islamia, New Delhi-110025, Delhi, India}
\author[label2]{Aliya~Kazmi\corref{cor}}
\ead{aliya.kazmi.20pd@bmu.edu.in}
\address[label2]{School of Engineering \& Technology, BML Munjal University, Gurgaon-122413, Haryana, India}
\author[label3]{Subhashree~Mohapatra}
\ead{subhashree@iiitd.ac.in}
\address[label3]{Department of Mathematics, Indraprastha Institute of Information Technology Delhi, New Delhi-110020, Delhi, India}
%\author[label4]{Mohammad~Sajid}
%\ead{msajd@qu.edu.sa}
%\address[label4]{Department of Mechanical Engineering, College of Engineering, Qassim University, Buraydah, Saudi Arabia}
\author[label2]{Ziya~Uddin}
\ead{ziya.uddin@bmu.edu.in}
\cortext[cor]{Corresponding author}
\date{}

\begin{abstract}
In this article, we propose $p$ and $hp$ least-squares spectral element methods for one dimensional elliptic
boundary layer problems. Stability estimates are derived and we design numerical schemes based on minimizing
the residuals in the sense of least-squares in appropriate Sobolev norms. We prove parameter robust uniform
error estimates i.e. error in the approximation is independent of the boundary layer parameter $\epsilon$. For
the $p$-version we prove a robust uniform convergence rate of $O\left(\sqrt{\log W}/{W}\right)$, where $W$
denotes the polynomial order used in approximation and for the $hp$-version the convergence rate is shown to
be $O\left(e^{-{W}/{\log W}}\right)$. Numerical results are presented for a number of model elliptic boundary
layer problems confirming the theoretical estimates and uniform convergence results for the $p$ and $hp$ versions.
\end{abstract}

\begin{keyword}
Elliptic problems \sep boundary layers \sep spectral element method \sep least-squares
\sep computational complexity \sep exponential convergence \sep numerical results
\end{keyword}

\end{frontmatter}

%----------------------------------------------------------------
\section{Introduction}\label{sec1}
Boundary layers arise as rapidly varying solution components in singularly perturbed elliptic boundary value problems
throughout science and engineering due to the presence of a parameter which may be very small in practice. Examples of
such problems include the convection-diffusion equation, the Navier-Stokes equations in fluid flows with small viscosity,
the Oseen equation, and equations arising in the modeling of semi-conductor devices. They feature diverse spatial scales
(e.g., errors introduced in boundary layers propagate and pollute the solution in the smooth regions) which adds further
difficulties to resolve the boundary layers and complicate traditional numerical approaches. Without proper resolution of
micro-scale structures, standard finite difference and finite element techniques can not provide accurate solutions to
these problems. In some worst cases, these methods may diverge too. Moreover, an important design and implementation aspect
of numerical methods for these problems is robustness, that is, that the performance of the numerical method is independent
or at least fairly insensitive to the parameter. This makes it very difficult to design an efficient and accurate numerical
scheme for this class of problems. A large amount of literature is dedicated to robust numerical methods which have been
proposed and analyzed both theoretically and computationally for a large class of singularly perturbed problems and we refer
to the monographs and expository articles~\cite{DMS,LIN,M2,MRS,MOR,ROO2,RST,SCHW} and their extensive bibliographies for a
great deal of exposition and state of the art research related to singularly perturbed problems.

Most numerical methods employed in the study of singularly perturbed problems are lower order methods based on either finite
difference or lower order finite element methods along with various mesh refinement techniques. In this context we refer
to~\cite{M1,MS3,SCHW1,SSX1} for one-dimensional elliptic boundary layer problems using $p$ and $hp$ finite element methods.
These works contains the proofs of first robust exponential convergence results for a class of boundary layer problems
including convection-diffusion and reaction-diffusion equations. Extension of these results to two-dimensional elliptic
boundary layer problems on smooth domains is available in~\cite{MS1,MS2,SW,XENO1,XENO2} and for problems in non-smooth domains
we refer to~\cite{XENO3}. Finite element methods for singularly perturbed systems of reaction-diffusion equation were considered
in~\cite{XENO4,XO1,XO2,MXO,MX} using the $p$ and $hp$ methods. Extensions to the fourth order elliptic boundary layer problems
is available in~\cite{CVX,CX,CFLX,PZMX} using mixed and $hp$ finite element methods and further consideration on the fourth order
problems containing two small parameters is provided in~\cite{SX,XFS} including the study of eigenvalue problems~\cite{RSX}.
Applications of these methods to singularly perturbed problems of transmission type is studied by Nicaise et al.~\cite{NX1,NX2,NX3,NX4}
and to problems in mechanics (plate and shell problems) by Schwab et al.~\cite{SCHW2}. In~\cite{BS,VHP} robust convergence
estimates were obtained using $h$-version of the finite element method in one dimension. In contrast to the finite element
methods, there have been fewer studies using spectral methods for these problems and we refer to~\cite{CA} for schemes based
on spectral approximations to resolve boundary layers in elliptic problems of Helmholtz and advection-diffusion type. An
efficient spectral Galerkin method for singularly perturbed convection-dominated equation is discussed in~\cite{LS,LT1} which
improves $p$-type estimates of~\cite{CA} using special `mapped' polynomials in one and two dimensions, where singular mappings
of appropriately high order are used to establish algebraic rates of convergence that deteriorate relatively slowly as
$\epsilon\rightarrow 0$. Other interesting and related works for elliptic boundary layer problems using spectral methods are
proposed in~\cite{LT2,TT}.

In the recent years, spectral element methods have been studied and implemented extensively for solving partial differential
equations in several fields because of their flexibility and high order of accuracy (see monographs~\cite{CHQ1,CHQ2,KS,ST} and
references therein). In particular, least-squares spectral element methods are very promising methods which combine the generality
of finite element methods with the accuracy of the spectral methods~\cite{DB,DBG,DKU,DT,DHMU1,DHMU2,DHMU3,KR}. Moreover, these methods
have also the theoretical and computational advantages in the algorithmic design and implementation of the least-squares methods.

The present work is devoted to a detailed analysis of high order spectral element methods, namely, the $p$ and $hp$-version of
the spectral element method (SEM) for one dimensional elliptic boundary layer problems. These problems form a subclass of more
general singularly perturbed problems containing rapidly varying solutions called `boundary layers' (a kind of singularities)
that arise only at the boundary of the domain under consideration. The numerical solution of these problems is of high practical
and mathematical interest. Classical numerical methods which are suitable when the boundary layer parameter (usually denoted by
$\epsilon$) is $O(1)$ become inappropriate as $\epsilon\rightarrow 0$ or is very small, unless the number of degrees of freedom
(say $N$) in one dimension is $N=O(\epsilon^{-1})$, without this, they may fail to resolve boundary layers which are the regions
of foremost interest. The difficulty in applying numerical methods to solve such problems stems from the fact that the derivatives
of these solutions depend on negative powers $\epsilon$ and since estimates for the errors in the approximate numerical solution
obtained by classical schemes depend on bounds for these derivatives, therefore, they are not parameter robust i.e., they do not
hold for small values of $\epsilon$.

In this article we propose a least-squares spectral element method for one dimensional elliptic boundary layer problems using $p$
and $hp$ type discretizations. The method is based on minimizing the sum of the squared $L^2$ norm of the residuals in the
differential equation, the sum of the squared $L^2$ norm of residuals in the boundary conditions and the sum of the jumps in the
function value and its derivatives across the inter-element nodes in $L^2$ norm to enforce the continuity along the inter-element
boundary. The solution is computed by solving normal equations arising from the least-squares formulation using the Preconditioned
Conjugate Gradient Method (PCGM). The resulting linear system is usually ill-conditioned because of the presence of boundary layer
parameter and the condition number of the system deteriorate further for smaller values of the boundary layer parameter and,
therefore, careful design of suitable preconditioners is necessary in order to solve the system using PCGM algorithm in a way that
is robust and efficient with respect to the perturbation parameter. We propose boundary layer preconditioners, similar to the
preconditioners introduced by Dutt et al.~\cite{DBR,DHMU3} for elliptic problems for a spectral element method using a set of
quadratic forms and we also establish a suitable stopping criterion for the solver which is again very critical for saving computational
efforts. The integrals which are involved in the residual computations, are evaluated efficiently by Gauss-quadrature rules. The
proposed method delivers exponential accuracy provided the input data is analytic.

Our main results of this study are the stability estimates, design of numerical scheme and proof of the robust exponential
convergence results for the $p$ and $hp$-versions for a non-conforming spectral element method. It is shown that with a
suitable choice of the spectral element spaces the $p$ version of the method delivers super-exponential convergence when
$W_0>\frac{e}{2\epsilon}$, with $W_0=W+\frac{1}{2}$, where $W$ denotes the polynomial order used for approximation. Indeed,
for the $p$-version, we prove that error between the exact and computed solution is $O\left(\frac{\sqrt{\log W}}{W}\right)$
in a modified parameter dependent $H^2$-norm which is uniform in $\epsilon$. For the $hp$ version, we show that error decays
exponentially and the rate of convergence satisfy an estimate which is $O\left(e^{-{W}/{\log W}}\right)$. We discuss computational
complexity of the method and present results of computational experiments for a number of model elliptic boundary layer problems
for the $p$ and $hp$ versions with homogenous and nonhomogeneous Dirichlet boundary conditions including convection-diffusion
and reaction-diffusion problems for parameter values between $\epsilon=0.1$ to $\epsilon=10^{-4}$. The numerical results validate
the theoretical estimates and comparison of simulation results with those available in the literature are also provided.

This work presents the first robust exponential convergence result for elliptic boundary problems in the framework of
least-squares spectral element methods. The study is motivated by similar works on boundary layer problems by Schwab et
al.~\cite{SCHW1,SSX1} and lies at the intersection of several active research areas having their own distinct approaches
and techniques such as numerical methods for elliptic boundary layer problems, high order numerical methods for elliptic
problems, regularity theory for elliptic boundary layer problems, robust exponential convergence results and least-squares
methods making these ideas a cumulatively a very strong approach for resolving boundary layers.

\section{Elliptic boundary layer problem}\label{sec2}
In one dimension, boundary layers are solution components of the form
\begin{align}\label{2.1}
u_{\epsilon}(x)=\exp{\left(-\frac{x}{\epsilon}\right)},\quad \epsilon>0,\quad  x\in I\subset\mathbb{R},
\end{align}
where $0<\epsilon\leq 1$ is a small parameter called `boundary layer parameter' and $I$ is a bounded interval.
We are interested in designing numerical schemes which will resolve the boundary layers at a robust exponential
rate when (\ref{2.1}) is approximated by polynomials using non-conforming $p$ and $hp$ spectral element methods,
i.e., approximation error is uniform and independent of the parameter $\epsilon$.

Boundary layers given by (\ref{2.1}) arise in many model problems as solution components of singularly
perturbed elliptic boundary value problems. Consider the simple one-dimensional elliptic boundary layer problem
\begin{subequations}
\begin{align}\label{eq2.2}
&\mathcal{L}u=-\epsilon^2 u^{\prime\prime}(x)+u(x)=f(x), \quad x\in \Omega=(a,b)\\
&u(a)=\alpha,\quad u(b)=\beta,
\end{align}
\end{subequations}
where $f\in L^2(\Omega)$. The problem $(2.2a)-(2.2b)$, in general, has boundary layers at both ends of the boundary
i.e. at $x=a,b$ of the form $(\ref{2.1})$.

Let $H^{m}(\Omega)$ denote the usual Sobolev space of $L^{2}(\Omega)$ functions with derivatives of order less than
or equal to $m (m\geq 0)$ in $L^{2}(\Omega)$ equipped with the norm
\[\|u\|^2_{H^{m}(\Omega)}=\|u\|^2_{m,\Omega}=\sum_{|\alpha|\leq m}{\|D^{\alpha}u\|}_{L^{2}(\Omega)}^{2}
=\sum_{|\alpha|\leq m}{\left\|\frac{d^{\alpha}u}{dx^{\alpha}}\right\|}_{L^{2}(\Omega)}^{2},\]
and seminorm
\[|u|^2_{H^{m}(\Omega)}=|u|^2_{m,\Omega}=\sum_{|\alpha|=m}{\|D^{\alpha}u\|}_{L^{2}(\Omega)}^{2}
=\sum_{|\alpha|=m}{\left\|\frac{d^{\alpha}u}{dx^{\alpha}}\right\|}_{L^{2}(\Omega)}^{2},\]
where, $\alpha\in\mathbb{N}_0$, with $\mathbb{N}=\mathbb{N}\cup\{0\}$, $\mathbb{N}$ is the set of positive integers
and $D^{\alpha}u$ is the distributional (weak) derivative of $u$ of order $\alpha$. %Further, let

We define the parameter dependent norm
\begin{align*}
\|u\|^2_{m,\epsilon,\Omega}=\sum_{|\alpha|\leq m}\epsilon^{2m}{\left\|\frac{d^{\alpha}u}{dx^{\alpha}}\right\|}_{L^{2}(\Omega)}^{2},
\end{align*}
and set
\begin{align*}
H^m_{0}(\Omega)=\{u\in H^{m}(\Omega): u=0 \:\mbox{on}\: \partial\Omega\},\quad\quad H^m_{D}(\Omega)=H^{m}(\Omega)\cap \{u|_{\partial\Omega}\}.
\end{align*}
Let $H^m_{\epsilon}(\Omega)$ denote the space
\[H^m_{\epsilon}(\Omega)=\{u\in H^m(\Omega): \|u\|_{m,\epsilon,\Omega}<\infty\}.\]
The following a-priori regularity estimates give existence and uniqueness of the solution of ({2.2a})-({2.2b}) under usual regularity assumptions on the data.
\begin{thm}[Regularity estimate~\cite{M1}]\label{thm1}
If $f\in L^{2}(\Omega)$, then there is a unique solution $u\in H^2_\epsilon(\Omega)$ to the problem (2.2a)-(2.2b) satisfying
\begin{equation}\label{eq2.4}
\|u\|_{2,\Omega,\epsilon}\leq \|f\|_{0,\Omega}+C(\epsilon)\left(|\alpha|+|\beta|\right),\quad C(\epsilon)\leq 2\sqrt{\frac{\epsilon^2}{2}+\frac{2}{3}}.
\end{equation}
\end{thm}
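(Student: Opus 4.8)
The plan is to establish the regularity estimate \eqref{eq2.4} by the standard energy method, first reducing to the homogeneous-boundary case and then testing the equation against $u$ itself. First I would split the solution as $u = u_0 + g$, where $g$ is the linear interpolant of the boundary data, $g(x) = \alpha + (\beta-\alpha)(x-a)/(b-a)$, so that $g(a)=\alpha$, $g(b)=\beta$, and $g'' = 0$. Then $u_0 = u - g \in H^1_0(\Omega)$ solves $-\epsilon^2 u_0'' + u_0 = f - g =: \tilde{f}$ with homogeneous Dirichlet conditions. The weak form is to find $u_0 \in H^1_0(\Omega)$ such that
\begin{equation*}
B(u_0,v) := \epsilon^2 (u_0',v') + (u_0,v) = (\tilde{f},v) \quad \text{for all } v \in H^1_0(\Omega),
\end{equation*}
and since $B$ is coercive on $H^1_0(\Omega)$ uniformly in $\epsilon$, the Lax--Milgram theorem gives existence and uniqueness of $u_0$, hence of $u$.

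Next I would extract the quantitative bound. Testing with $v = u_0$ yields $\epsilon^2 \|u_0'\|_{0,\Omega}^2 + \|u_0\|_{0,\Omega}^2 = (\tilde{f},u_0)$, and Cauchy--Schwarz plus Young's inequality controls the right side, giving bounds on $\|u_0\|_{0,\Omega}$ and $\epsilon\|u_0'\|_{0,\Omega}$. To reach the full $H^2_\epsilon$-norm, which weights the second derivative by $\epsilon^4$ (or $\epsilon^2$, matching the definition of $\|\cdot\|_{2,\epsilon,\Omega}$), I would use the equation itself to write $\epsilon^2 u_0'' = u_0 - \tilde{f}$, so that $\epsilon^2\|u_0''\|_{0,\Omega} \leq \|u_0\|_{0,\Omega} + \|\tilde{f}\|_{0,\Omega}$ is already controlled. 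Adding the three contributions and re-expressing everything in terms of $u = u_0 + g$ and $f$ produces an estimate of the form $\|u\|_{2,\epsilon,\Omega} \leq \|f\|_{0,\Omega} + C(\epsilon)(|\alpha|+|\beta|)$.

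The main obstacle I expect is the careful tracking of the boundary-data constant $C(\epsilon)$ so that it matches the stated bound $C(\epsilon) \le 2\sqrt{\epsilon^2/2 + 2/3}$. This requires computing the exact $\epsilon$-weighted norms of the lift $g$ and its derivative: since $g$ is affine, $g''=0$ and $g' = (\beta-\alpha)/(b-a)$ is constant, so $\|g'\|_{0,\Omega}^2$ and $\|g\|_{0,\Omega}^2$ are explicit elementary integrals in terms of $\alpha$, $\beta$, and the interval length. Normalizing to a reference interval (presumably $|b-a|$ of a convenient length, consistent with the source \cite{M1}) and bounding $|\beta-\alpha| \le |\alpha|+|\beta|$, one collects the numerical factors; the square root and the constants $1/2$ and $2/3$ should emerge from the $L^2$ norm of a linear function on the reference interval together with the $\epsilon^2$ weight on the first-derivative term. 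I would verify the arithmetic by treating the two pieces $\|g\|_{0,\epsilon,\Omega}$ and $\epsilon\|g'\|_{0,\epsilon,\Omega}$ separately and combining them via the triangle inequality in the $\|\cdot\|_{2,\epsilon,\Omega}$ norm. Since this is a known result cited from \cite{M1}, the proof is essentially a verification, and I would most likely reproduce the argument of that reference rather than reconstruct the optimal constant from scratch.
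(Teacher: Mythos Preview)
The paper does not prove this theorem at all: it is stated with attribution to \cite{M1} and no argument is given. So there is no ``paper's own proof'' to compare against. Your outline --- reduce to homogeneous data by subtracting the affine lift $g$, apply Lax--Milgram for existence and uniqueness, test with $u_0$ to control $\|u_0\|_{0}$ and $\epsilon\|u_0'\|_{0}$, and recover $\epsilon^2\|u_0''\|_{0}$ directly from the equation --- is exactly the standard energy argument and is the route taken in the cited reference. Your closing remark that you would ``reproduce the argument of that reference'' is in fact all the paper itself does, implicitly.

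One small sharpening worth noting: rather than bounding $\epsilon^2\|u_0''\|$ by the triangle inequality, you can expand $\|\epsilon^2 u_0''\|^2 = \|u_0 - \tilde f\|^2$ and use the identity $(\tilde f,u_0) = \epsilon^2\|u_0'\|^2 + \|u_0\|^2$ to obtain the exact relation
\[
\epsilon^4\|u_0''\|_{0}^2 + 2\epsilon^2\|u_0'\|_{0}^2 + \|u_0\|_{0}^2 = \|\tilde f\|_{0}^2,
\]
which immediately gives $\|u_0\|_{2,\epsilon,\Omega} \le \|\tilde f\|_{0,\Omega}$ with constant $1$. The entire boundary-data contribution $C(\epsilon)(|\alpha|+|\beta|)$ then comes from $\|g\|_{0,\Omega}$ (appearing in $\tilde f = f-g$) together with $\|g\|_{2,\epsilon,\Omega}$, and since $g$ is affine these are elementary integrals yielding the stated numerical constants on a reference interval. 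This is precisely the computation in \cite{M1}, and your plan identifies it correctly.
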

\begin{thm}[Regularity estimate~\cite{SCHW1}]\label{thm2}
If $f\in H^{m}(\Omega)$, then the solution $u\in H^{m+2}(\Omega)\cap H^m_{\epsilon}(\Omega)$ and the shift estimate
\begin{equation}\label{eq2.5}
\|u\|_{m+2,\Omega} \leq C(m,\epsilon)\|f\|_{m,\Omega},\quad m\geq 0,
\end{equation}
holds.
\end{thm}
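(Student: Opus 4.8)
The plan is to prove the shift estimate by induction on $m$, exploiting the fact that the operator $\mathcal{L}=-\epsilon^2\frac{d^2}{dx^2}+\mathrm{Id}$ has constant coefficients, so that $u''$ is expressed \emph{algebraically} through $u$ and $f$; this reduces the elliptic regularity bootstrap to a purely algebraic recursion and renders the difference-quotient machinery of variable-coefficient theory unnecessary. Since the right-hand side of \eqref{eq2.5} carries no boundary contribution, I read the estimate as stated for homogeneous data; the general case is recovered at the end by subtracting the linear interpolant $\ell(x)$ of the boundary values (so that $\mathcal{L}\ell=\ell$ and $\tilde u=u-\ell$ solves the homogeneous problem with datum $\tilde f=f-\ell$), which only adds a term of the type $C(|\alpha|+|\beta|)$ already present in Theorem~\ref{thm1}.

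For the base case $m=0$ I would test the equation against $u$. Integration by parts annihilates the boundary term under the homogeneous conditions and yields the energy identity
\[
\epsilon^2\|u'\|_{0,\Omega}^2+\|u\|_{0,\Omega}^2=(f,u)_{0,\Omega}.
\]
Cauchy--Schwarz then gives $\|u\|_{0,\Omega}\le\|f\|_{0,\Omega}$ and $\|u'\|_{0,\Omega}\le\epsilon^{-1}\|f\|_{0,\Omega}$, while rewriting the equation as $u''=\epsilon^{-2}(u-f)$ bounds $\|u''\|_{0,\Omega}\le 2\epsilon^{-2}\|f\|_{0,\Omega}$. Thus $u\in H^2(\Omega)$ with $\|u\|_{2,\Omega}\le C(0,\epsilon)\|f\|_{0,\Omega}$ and $C(0,\epsilon)=O(\epsilon^{-2})$, which simultaneously establishes membership in $H^2_\epsilon(\Omega)$.

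The inductive step rests on differentiating the equation $k$ times to obtain the algebraic identity
\[
u^{(k+2)}=\epsilon^{-2}\bigl(u^{(k)}-f^{(k)}\bigr),\qquad k\ge 0,
\]
valid in the distributional sense. Whenever $u\in H^{k+1}(\Omega)$ and $f\in H^{k}(\Omega)$, the right-hand side lies in $H^{k}(\Omega)$, so $u\in H^{k+2}(\Omega)$, giving the regularity claim for free; at the same time $\|u^{(k+2)}\|_{0,\Omega}\le\epsilon^{-2}\bigl(\|u^{(k)}\|_{0,\Omega}+\|f^{(k)}\|_{0,\Omega}\bigr)$. Feeding the base-case bounds for $\|u\|_{0,\Omega}$ and $\|u'\|_{0,\Omega}$ into this two-step recursion and unrolling it, I would control every $\|u^{(j)}\|_{0,\Omega}$ with $j\le m+2$ by a constant multiple of $\|f\|_{m,\Omega}$, each pair of differentiations contributing one factor $\epsilon^{-2}$; summing over $j$ produces $\|u\|_{m+2,\Omega}\le C(m,\epsilon)\|f\|_{m,\Omega}$ with $C(m,\epsilon)$ of order $\epsilon^{-(m+2)}$.

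The main obstacle is not conceptual but lies in the careful bookkeeping of the $\epsilon$-dependence of $C(m,\epsilon)$ as the recursion is unrolled, since the constant necessarily degrades like a negative power of $\epsilon$ growing with $m$. This degradation is expected and not a defect: it reflects that the \emph{unweighted} shift estimate \eqref{eq2.5} is not robust in $\epsilon$, in contrast with the $\epsilon$-weighted estimate of Theorem~\ref{thm1}, which is uniform. The constant-coefficient structure is precisely what makes the bootstrap itself immediate, so that all the genuine work reduces to the explicit tracking of these constants.
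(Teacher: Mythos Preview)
The paper does not prove this theorem at all; it is quoted from \cite{SCHW1} as a background regularity result and immediately followed by the remark that the constant $C(m,\epsilon)$ is non-uniform in $\epsilon$. Your proposal therefore supplies what the paper omits, and the strategy is sound: the constant-coefficient structure indeed reduces the elliptic bootstrap to the algebraic recursion $u^{(k+2)}=\epsilon^{-2}(u^{(k)}-f^{(k)})$, the base case via the energy identity is correct, and the reduction of inhomogeneous boundary data by subtracting the linear interpolant $\ell$ (for which $\mathcal{L}\ell=\ell$) is the right move and correctly accounts for the missing boundary term in \eqref{eq2.5}.

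One small slip to fix in the inductive step: you assert that under the hypotheses $u\in H^{k+1}(\Omega)$ and $f\in H^{k}(\Omega)$ ``the right-hand side lies in $H^{k}(\Omega)$''. In fact $u^{(k)}\in H^{1}(\Omega)$ while $f^{(k)}\in L^{2}(\Omega)$ only, so $\epsilon^{-2}(u^{(k)}-f^{(k)})$ is merely in $L^{2}(\Omega)$. That, however, is exactly what is needed: it gives $u^{(k+2)}\in L^{2}(\Omega)$, hence $u\in H^{k+2}(\Omega)$, and the induction advances one step at a time as intended. With this correction the argument is complete, and your observation that the constant degrades like $\epsilon^{-(m+2)}$ is precisely the non-robustness the paper flags after stating the theorem.
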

Note that the above regularity estimates are non-uniform in $\epsilon$ since in the a priori ``shift'' estimates
(\ref{eq2.4})-(\ref{eq2.5}) the constant $C$ depends strongly on the boundary layer parameter $\epsilon$.
\begin{rem}
Schwab et al.~\cite{SCHW1} have shown that, by decomposing the solution $u(x)$ in terms of classical asymptotic expansion
of the solution into a smooth part and boundary layers parts, the regularity of the solution $u(x)$ is determined by the
boundary layer terms when the data $f$ is smooth enough (i.e., $k$ large enough) and the estimate
\begin{equation}\label{eq2.7}
|u_\epsilon(x)|_m\leq \epsilon^{\frac{1}{2}-m}\left(\frac{1-e^{-4/{\epsilon}}}{2}\right)^{1/2}\approx C\left(1+\epsilon^{\frac{1}{2}-m}\right),\quad m=0,1,\cdots,2k,
\end{equation}
holds. Here, the constant $C$ is independent of $\epsilon$ but may depend upon $f$ and $\alpha,\beta$.
\end{rem}

\section{Differentiability and stability estimates}\label{sec3}
In this sections, we present precise estimates for differentiability and stability of solutions and their derivatives
of all orders for elliptic boundary layer problems of the form (2.2a)-(2.2b). Using these differentiability and stability
estimates it will be possible to design a numerical scheme which will deliver robust exponential convergence and efficiency
of computations. Thus, we need to look for properly designed spaces in which precise behavior of the exact solution of
(2.2a)-(2.2b) can be approximated.

\subsection{Differentiability estimates}
Let $u_\epsilon$ be the exact solution of the boundary layer problem (2.2a)-(2.2b). If the input data $f$ is analytic on
$\overline{\Omega}$ then the standard elliptic regularity theory implies that $u_\epsilon$ is analytic in a neighborhood
of $\Omega$ and the following differentiability estimates holds.
\begin{prop}\label{prop2.2.4}
Let $f$ be analytic on $\overline{\Omega}$ then there exists constants $C_\epsilon$ and $d_\epsilon$ depending on $\epsilon$
such that the estimate
\begin{align}\label{3.1}
\underset{{\Omega}}\int\sum_{|\alpha|\leq m}\left|\:D^\alpha u_\epsilon(x)\right|^2 dx \leq C_\epsilon\,(d_\epsilon^m\,m!)^2
\end{align}
holds for all integers $m\geq 1$.
\end{prop}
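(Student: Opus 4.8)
The plan is to establish the stronger pointwise-in-$m$ bound $\|u_\epsilon^{(k)}\|_{0,\Omega} \le C_\epsilon\, d_\epsilon^{k}\, k!$ for every integer $k \ge 0$, with constants depending only on $\epsilon$ and the data, and then to recover \eqref{3.1} by summing over $0 \le k \le m$. Since $f$ is analytic on the compact set $\overline{\Omega}$, the Cauchy estimates (equivalently, the defining property of real-analytic functions) furnish constants $C_f, d_f > 0$ with $\|f^{(k)}\|_{0,\Omega} \le C_f\, d_f^{k}\, k!$ for all $k$. This reduces the proposition to propagating analytic-type control from $f$ to $u_\epsilon$ through the differential equation.

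First I would differentiate the equation $-\epsilon^2 u_\epsilon'' + u_\epsilon = f$ a total of $k$ times to obtain the exact recursion
\[
u_\epsilon^{(k+2)} = \frac{1}{\epsilon^2}\left(u_\epsilon^{(k)} - f^{(k)}\right), \qquad k \ge 0,
\]
which expresses every derivative of order at least two in terms of a lower-order derivative and a derivative of the data. Writing $a_k = \|u_\epsilon^{(k)}\|_{0,\Omega}$ and taking $L^2$-norms gives the scalar recursion $a_{k+2} \le \epsilon^{-2}\bigl(a_k + C_f\, d_f^k\, k!\bigr)$. The base quantities $a_0, a_1$ (and $a_2$) are finite and controlled, with an $\epsilon$-dependent constant, by the regularity estimate of Theorem \ref{thm1}, which bounds $\|u_\epsilon\|_{2,\Omega,\epsilon}$ in terms of $\|f\|_{0,\Omega}$ and the boundary data.

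Next I would close an induction on $k$. Assuming $a_k \le C_\epsilon\, d_\epsilon^k\, k!$, the recursion yields $a_{k+2} \le \epsilon^{-2}\bigl(C_\epsilon d_\epsilon^k + C_f d_f^k\bigr)k!$, while the target value $C_\epsilon\, d_\epsilon^{k+2}(k+2)! = C_\epsilon d_\epsilon^k k!\cdot d_\epsilon^2 (k+2)(k+1)$ carries the extra factor $d_\epsilon^2 (k+2)(k+1) \ge 2 d_\epsilon^2$. Hence, once $C_\epsilon \ge C_f$ and $d_\epsilon \ge d_f$ (so that $C_f d_f^k \le C_\epsilon d_\epsilon^k$) and $d_\epsilon \ge 1/\epsilon$ (so that $\epsilon^{-2} \le d_\epsilon^2$), the step closes — precisely the point at which the $\epsilon^{-2}$ amplification is absorbed into the growth constant. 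This is exactly why the resulting constants are $\epsilon$-dependent, consistent with the non-uniform (in $\epsilon$) character of the shift estimates noted after Theorem \ref{thm2}.

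Finally, summing the pointwise bounds gives $\int_\Omega \sum_{|\alpha|\le m}|D^\alpha u_\epsilon|^2\,dx = \sum_{k=0}^{m} a_k^2 \le C_\epsilon^2 \sum_{k=0}^m d_\epsilon^{2k}(k!)^2$. Because the ratio of consecutive terms is $d_\epsilon^2 (k+1)^2 \ge d_\epsilon^2 \ge 1$, the sum is dominated by its last term up to a fixed geometric factor, so $\sum_{k=0}^m a_k^2 \le \tilde C_\epsilon\,(d_\epsilon^m m!)^2$ after renaming the constant, which is \eqref{3.1}. I expect the only genuinely delicate point to be the bookkeeping of the $\epsilon$-dependence: one must verify that the base-case constants from Theorem \ref{thm1} and the analyticity constants of $f$ can be folded into a single $C_\epsilon$, and that the choice $d_\epsilon \sim 1/\epsilon$ suffices, so that no hidden blow-up in $m$ escapes the factorial scaling. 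An alternative route — extending $u_\epsilon$ holomorphically to a complex strip of $\epsilon$-dependent width via elliptic regularity and applying Cauchy's estimates — would give the same conclusion, but the ODE recursion above yields the explicit constants most directly.
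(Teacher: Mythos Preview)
Your argument is correct, but it takes a genuinely different route from the paper. The paper's proof is a two-line appeal to classical theory: since $f$ is analytic on $\overline{\Omega}$, standard elliptic regularity for analytic data (or, more simply, the explicit solution formula for the constant-coefficient ODE) implies that $u_\epsilon$ itself is real-analytic in a neighborhood of $\overline{\Omega}$, and then \eqref{3.1} is nothing more than the standard characterization of real-analytic functions on compact sets. No recursion, no induction, and no tracking of the $\epsilon$-dependence --- the constants $C_\epsilon$, $d_\epsilon$ are simply whatever the abstract analyticity bound produces.

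Your approach, by contrast, is constructive: you differentiate the equation to obtain the two-step recursion $u_\epsilon^{(k+2)} = \epsilon^{-2}(u_\epsilon^{(k)} - f^{(k)})$, seed it with the low-order bounds from Theorem~\ref{thm1}, and close an induction that forces $d_\epsilon \gtrsim 1/\epsilon$. This buys you something the paper's proof does not: an explicit identification of how the constants degenerate as $\epsilon \to 0$, which is exactly the content of the \emph{next} proposition (the estimate \eqref{3.2} with $\max(m,\epsilon^{-1})^m$). In effect, your argument proves a sharper statement than is needed for Proposition~\ref{prop2.2.4}, and anticipates the refinement that the paper defers to the subsequent result. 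One small point to tighten: in your final summation step, the ``fixed geometric factor'' depends on $d_\epsilon$ (hence on $\epsilon$) through $d_\epsilon^2/(d_\epsilon^2-1)$, which is harmless here since the proposition allows $\epsilon$-dependent constants, but is worth stating explicitly.
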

\begin{proof}
Since $f(x)$ is analytic on $\overline{\Omega}$, therefore, $u_\epsilon$ is analytic in a neighborhood of $\Omega$. Hence (\ref{3.1})
follows by the standard bounds on the derivatives of a real analytic function on a compact set.
\end{proof}
Note that, the constants $C_\epsilon$ and $d_\epsilon$ depend on $\epsilon$ and we need to explicitly control the dependence
of the derivatives of $u_\epsilon$ on the perturbation parameter $\epsilon$ for the design of an efficient numerical scheme.
In this connection, we recall an important differentiability estimate from~\cite{M1}.
\begin{prop}[]
There exists constants $C, d>0$ (independent of $\epsilon$) depending only on $f$, $\alpha$ and $\beta$ such that the estimate
\begin{align}\label{3.2}
\underset{{\Omega}}\int\sum_{|\alpha|\leq m}\left|\:D^\alpha u_\epsilon(x)\right|^2 dx
\leq Cd^{2m}\left(\max{\left(m,\epsilon^{-1}\right)^m}\right)^2
\end{align}
holds for all integers $m\geq 1$.
\end{prop}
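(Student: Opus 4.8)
The plan is to prove the sharper per-derivative bound
\[
\|u_\epsilon^{(m)}\|_{0,\Omega}\le C\,d^{\,m}\max\!\big(m,\epsilon^{-1}\big)^{m},\qquad m\ge 0,
\]
by induction, and then recover \eqref{3.2} by summing over $0\le|\alpha|\le m$: since $\epsilon\le1$ the quantity $\max(k,\epsilon^{-1})^{k}$ is nondecreasing in $k$, so each of the $m+1$ terms is dominated by the top one and the factor $m+1$ is absorbed into a slightly larger $d$. The engine of the induction is the equation itself: differentiating $-\epsilon^{2}u''+u=f$ exactly $m-2$ times gives $u^{(m)}=\epsilon^{-2}\big(u^{(m-2)}-f^{(m-2)}\big)$, whence the two-step recursion
\[
\|u^{(m)}\|_{0,\Omega}\le \epsilon^{-2}\Big(\|u^{(m-2)}\|_{0,\Omega}+\|f^{(m-2)}\|_{0,\Omega}\Big),\qquad m\ge 2,
\]
which is legitimate because $u_\epsilon$ is analytic by Proposition~\ref{prop2.2.4}. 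Analyticity of $f$ on $\overline\Omega$ supplies Cauchy-type bounds $\|f^{(k)}\|_{0,\Omega}\le C_f R^{k}k!$ with $C_f,R$ independent of $\epsilon$.

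For the base cases I would use the energy estimate underlying Theorem~\ref{thm1}: testing the equation against $u$ and absorbing the boundary contributions of $\alpha,\beta$ gives $\|u\|_{0,\Omega}\le C$ and $\epsilon\|u'\|_{0,\Omega}\le C$, i.e. $\|u\|_{0,\Omega}\le C=C\max(0,\epsilon^{-1})^{0}$ and $\|u'\|_{0,\Omega}\le C\epsilon^{-1}=C\max(1,\epsilon^{-1})$ with $\epsilon$-independent $C$. These are exactly the $m=0$ and $m=1$ anchors for the even and odd chains generated by the recursion.

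The inductive step is then arithmetic. Assuming the bound at order $m$, the recursion produces two contributions. For the solution term, $\epsilon^{-2}\max(m,\epsilon^{-1})^{m}\le\max(m+2,\epsilon^{-1})^{m+2}$, since $\max(m+2,\epsilon^{-1})\ge\epsilon^{-1}$ and $\ge\max(m,\epsilon^{-1})$. For the data term, $m!\le m^{m}\le\max(m+2,\epsilon^{-1})^{m}$ together with $\epsilon^{-2}\le\max(m+2,\epsilon^{-1})^{2}$ yields $\epsilon^{-2}\|f^{(m)}\|_{0,\Omega}\le C_f R^{m}\max(m+2,\epsilon^{-1})^{m+2}$. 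Choosing $d\ge\max(\sqrt2,R)$ and $C$ at least $\max(2C_f,\,C_{\mathrm{base}})$ lets both pieces be absorbed into $\tfrac12 C d^{\,m+2}\max(m+2,\epsilon^{-1})^{m+2}$, closing the induction.

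The main obstacle, and the reason the bound takes the form $\max(m,\epsilon^{-1})^{m}$ rather than either $\epsilon^{-m}$ or $m!$ alone, is the interplay of two competing growth mechanisms: the factor $\epsilon^{-2}$ generated at every step (the boundary-layer scale, reflecting $|u_\epsilon^{(m)}|\sim\epsilon^{-m}$) versus the factorial growth $k!$ from the analyticity of $f$. When $m\lesssim\epsilon^{-1}$ the former dominates and the estimate behaves like $\epsilon^{-m}$; once $m\gtrsim\epsilon^{-1}$ the analytic growth $m^{m}$ takes over, and the inequality $m^{m}\le\max(m+2,\epsilon^{-1})^{m}$ is precisely what merges the two regimes into a single $\epsilon$-uniform bound. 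The care required is in fixing $C,d$ once and for all so that no step reintroduces $\epsilon$-dependence. One could instead argue via the decomposition of $u_\epsilon$ into a smooth asymptotic part, whose derivatives obey the $\epsilon$-uniform analytic bound $Cd^{m}m!$, plus boundary-layer correctors $e^{-(x-a)/\epsilon}$ and $e^{-(b-x)/\epsilon}$ whose $L^{2}$ derivative norms scale like $\epsilon^{1/2-m}$ as in \eqref{eq2.7}; but the direct induction above is self-contained and exhibits the $\max(m,\epsilon^{-1})$ structure most transparently.
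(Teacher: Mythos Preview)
The paper does not supply its own proof of this proposition: it is simply quoted from~\cite{M1} (Melenk) with the phrase ``we recall an important differentiability estimate from~\cite{M1}''. Your argument is correct and is essentially the standard proof that appears in that reference, namely the two-step recursion $u^{(m)}=\epsilon^{-2}\bigl(u^{(m-2)}-f^{(m-2)}\bigr)$ obtained by differentiating the equation, anchored by the $\epsilon$-uniform energy estimate of Theorem~\ref{thm1} and closed using the analytic bounds on~$f$; your bookkeeping for the constants $C,d$ and the final summation over $0\le|\alpha|\le m$ is sound.
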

The above Proposition says that the $m$th derivative of the solution $u_\epsilon$ will be independent of $\epsilon$
provided $m\geq \frac{1}{\epsilon}$. Roughly speaking, this amounts to say that the boundary layers arising for small
values of the perturbation parameter $\epsilon$ are not visible for derivatives of sufficiently large order. From this
we can obtain robust exponential convergence for the $p$-version of the spectral element method provided that the
polynomial order $W\approx O\left(\frac{1}{\epsilon}\right)$. Note that, the differentiability estimate (\ref{3.2})
does not capture the boundary layer behavior of the solution accurately for derivatives of order $m<\frac{1}{\epsilon}$.
More precisely, the exponential convergence is only visible if $W>\frac{1}{\epsilon}$ and in the practical range
$W\ll \frac{1}{\epsilon}$, we will only observe convergence of the type $O\left(\frac{\sqrt{\log W}}{W}\right)$ (see
Theorem 5.1) which is same as derived in~\cite{M1,M2,MS3,SCHW,SCHW1,SSX1}. This degradation in the performance of the
spectral method in the practical range of interest is due to the presence of boundary layers.

\subsection{Discretization and spectral element representation}
We divide the domain $\Omega=(a,b)$ into $N$ elements $\Omega_l=(x_{l-1},x_l)$, $l=1,2,\ldots,N$ of equal size for simplicity
(see Figure \ref{fig1}) such that $a=x_0<x_1<\ldots<x_N=b$. Let $\frac{(b-a)}{N}=h$ denotes the mesh size. We remark that, our
analysis and estimates remain true when the nodes $x_l$, $l=1,\cdots,N$ are not equally spaced.
\begin{figure}[ht]
\centering
\scalebox{0.60}[0.60]{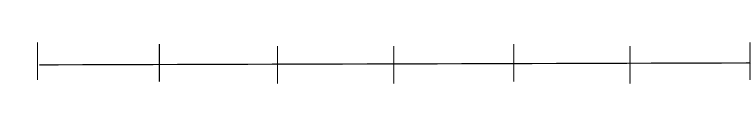}
\caption{Discretization of the domain $\Omega$ (uniform refinement)}
\label{fig1}
\end{figure}

A set of non-conforming spectral element functions are now defined on the elements $\Omega_l$ via polynomials
of degree $W$, where $W$ denote an upper bound on the degree of the polynomial representation of the spectral
element functions. Let $M_l$ denote the analytic (bijective) map from the reference interval $I_0=(-1,1)$ to
$\Omega_l$ defined by
\begin{align}\label{3.3}
x &=\left(\frac{1-\xi}{2}\right)x_{l-1}+\left(\frac{1+\xi}{2}\right)x_l,
\end{align}
having an analytic inverse. Note that $M_l^{-1}$ maps the $l$-th element $\Omega_l=(x_{l-1},x_l)$ to the
interval $I_0$. On each of the mapped interval $I_0$, we define our nonconforming spectral element functions
as polynomials of degree $W$ denoted by $\hat{u}_l$ and defined by
\begin{align}\label{3.4}
\hat{u}_l(\xi) &=\sum_{i=0}^{W}\alpha_i^{l}\xi^{i}.
\end{align}
Here, $\alpha_i^{l}$ are unknowns to be determined. Then the spectral element functions $u_l(x)$ on $\Omega_l$
are defined by
\begin{align}\label{3.5}
u_l(x) &=\hat{u}_l\left(M_l^{-1}(x)\right)
\end{align}
for $l=1,2,\ldots,N$.

\subsection{Stability estimate}
Let $\mathcal{S}^{W}(\Omega)=\{u_l(x)\}_l$ be the space of spectral element functions on $\Omega$ which are defined elementwise
by (\ref{3.5}) as polynomials of degree less than or equal to $W$.
To state the stability theorem we need to define some quadratic forms. Since the spectral element functions in our analysis
are nonconforming, therefore we need to consider the jumps in the function $u$ and its derivative at all the inter element
nodes to enforce the continuity at the inter element nodes.

Let $x_l$ denotes the inter element node common to two neighbours $\Omega_l$ and $\Omega_{l+1}$. Let $[u]|_{x_l}$ denote
the jump in $u$ across the inter element node $x_l$. Then we may assume that $x_l$ is the image of the right end point
$\xi=1$ of the reference interval $I_0$ under $M_l$ which takes $I_0$ to $\Omega_l$ and at the same time  $x_l$ is also
the image of the left end point $\xi=-1$ of the reference interval $I_0$ under the map $M_{l+1}$ which takes $I_0$ to
$\Omega_{l+1}$. Now using (\ref{3.3}), we can write
\begin{align*}
\frac{du_l}{dx}(x_l) &=\frac{d\hat{u}_l}{d\xi}(1)\frac{d\xi}{dx}=\frac{2}{h}\frac{d\hat{u}_l}{d\xi}(1), \\
\frac{du_{l+1}}{dx}(x_l) &=\frac{d\hat{u}_{l+1}}{d\xi}(-1)\frac{d\xi}{dx}=\frac{2}{h}\frac{d\hat{u}_{l+1}}{d\xi}(-1).
\end{align*}
Therefore, the jump terms simplify to
\begin{subequations}\label{3.6}
\begin{align}
\left|[u(x_l)]\right| &=|u_l(x_l)-u_{l+1}(x_l)|=|\hat{u}_{l}(1)-\hat{u}_{l+1}(-1)|, \\
\left|\left[\frac{du}{dx}(x_l)\right]\right| &=\left|\frac{du_l}{dx}(x_l)-\frac{du_{l+1}}{dx}(x_l)\right|
=\frac{2}{h}\left|\frac{d\hat{u}_l}{d\xi}(1)-\frac{d\hat{u}_{l+1}}{d\xi}(-1)\right|.
\end{align}
\end{subequations}
Finally, we consider the boundary conditions at the points $x=a$ and $x=b$. Since $x=a$ is the image of $\xi=-1$
under the mapping $M_1$ and $x=b$ is the image of $\xi=1$ under the mapping $M_N$, therefore
\begin{align}\label{3.7}
|u(a)| &=|\hat{u}_{1}(-1)| \quad \mbox{and}\quad |u(b)|=|\hat{u}_{N}(1)|.
\end{align}
Consider the $p-$version of the method with only one element in the decomposition of $\Omega$, i.e., $N=1$ and
define the quadratic form
\begin{align}\label{3.8}
\mathcal{V}_p^W(\{u(x)\}) &=\|\mathcal{L}u(x)\|_{0,\Omega}^{2}+|u(a)|^{2}+|u(b)|^{2}.
\end{align}
Next, consider the $hp-$version of the method with a variable mesh, i.e., $N>1$ and assume that $N$ is proportional
to $W$. Now define the quadratic form
\begin{align}\label{3.9}
\mathcal{V}_{hp}^W(\{u_l(x)\}_l) &=\sum_{l=1}^{N}\|\mathcal{L}u_l(x)\|_{0,\Omega_l}^{2}
+\sum_{l=1}^{N-1}\left(\left|[u(x_l)]\right|^{2}+\left|\left[\frac{du}{dx}(x_l)\right]\right|^{2}\right)+|u(a)|^{2}+|u(b)|^{2}.
\end{align}
For proving the stability estimate we shall make use of the following Lemma, the proof of which is similar to the proof
of Lemma 2.1 of~\cite{KJ} and hence omitted.
%-------------------------------------------
\begin{lem}\label{lem1}
Let $\{u_l(x)\}_l\in\mathcal{S}^{W}(\Omega)$. Then there exists a function $(\{v_l(x)\}_l)$ such that $v_{l}(a)=0$,
$v_{l}(b)=0$, $v_{l}\in H^{2}(\Omega_{l})$ and $u+v(=w)\in H^{2}(\Omega)$. Moreover, there exists a constant $C_\epsilon$ (depending
on $\epsilon$) such that
\begin{align}\label{3.10}
\sum_{l=1}^{N}||v_{l}||_{2,\epsilon,\Omega_{l}}^{2} & \le C_\epsilon\sum_{l=1}^{N-1}\left(|[u(x_{l})]|^{2}
+\left|\left[\frac{du}{dx}(x_{l})\right]\right|^{2}\right),\quad\mbox{provided}\quad W=O\left(\frac{1}{\epsilon}\right).
\end{align}
\end{lem}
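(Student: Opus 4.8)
The plan is to construct the correction $\{v_l\}_l$ element by element as a low-order Hermite-type polynomial that cancels exactly one half of each nodal jump, so that the corrected function $w=u+v$ becomes $C^1$ across every interior node and is therefore globally in $H^2(\Omega)$. First I would fix, at each interior node $x_l$, the averaged target data $\bar{u}_l:=\tfrac12(u_l(x_l)+u_{l+1}(x_l))$ and $\bar{u}'_l:=\tfrac12(u'_l(x_l)+u'_{l+1}(x_l))$, and require the correction to move the one-sided traces of $u$ onto these common values. On $\Omega_l$ this prescribes the four endpoint data of $v_l$: at its right node $x_l$ the value $\bar{u}_l-u_l(x_l)=-\tfrac12[u(x_l)]$ and the $x$-derivative $-\tfrac12[\tfrac{du}{dx}(x_l)]$, and symmetrically $+\tfrac12$ of the corresponding jumps at its left node $x_{l-1}$. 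At the global endpoints I would simply impose $v_1(a)=v'_1(a)=0$ and $v_N(b)=v'_N(b)=0$, which gives $v_1(a)=v_N(b)=0$ and leaves the boundary data of $u$ untouched.

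Next I would realize $v_l$ on the reference interval $I_0=(-1,1)$ as the unique Hermite cubic $\hat{v}_l$ matching the pulled-back data, i.e.\ a fixed linear combination of the four Hermite basis cubics whose coefficients are the prescribed nodal values and $\tfrac{h}{2}$ times the prescribed $x$-derivatives (the factor $\tfrac{h}{2}$ arising from the chain rule in $M_l$). Setting $v_l=\hat{v}_l\circ M_l^{-1}$ yields $v_l\in H^2(\Omega_l)$, and by the averaging construction the one-sided traces of $w=u+v$ and of its first derivative agree at every interior node. In one dimension continuity of $w$ and $w'$ across all the nodes is exactly what is required for $w\in H^2(\Omega)$, and since $v_1(a)=v_N(b)=0$ the boundary conditions are preserved.

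The quantitative estimate is where the work sits. Because the four Hermite cubics have fixed, $\epsilon$- and $h$-independent Sobolev norms on $I_0$, each of $\|\hat{v}_l\|_{0,I_0}$, $\|\hat{v}'_l\|_{0,I_0}$, $\|\hat{v}''_l\|_{0,I_0}$ is bounded by the sum of the absolute values of the four coefficients, hence by $C\left(|[u(x_{l-1})]|+|[u(x_l)]|+h\,|[\tfrac{du}{dx}(x_{l-1})]|+h\,|[\tfrac{du}{dx}(x_l)]|\right)$. Transplanting to $\Omega_l$ through the scaling relations $\|v_l\|_{0,\Omega_l}^2=\tfrac{h}{2}\|\hat{v}_l\|_{0,I_0}^2$, $\|v'_l\|_{0,\Omega_l}^2=\tfrac{2}{h}\|\hat{v}'_l\|_{0,I_0}^2$, $\|v''_l\|_{0,\Omega_l}^2=\tfrac{8}{h^3}\|\hat{v}''_l\|_{0,I_0}^2$ and inserting the $\epsilon$-weights of $\|\cdot\|_{2,\epsilon,\Omega_l}$, I obtain a per-element bound of the form $\|v_l\|_{2,\epsilon,\Omega_l}^2\le C_\epsilon\left(|[u(x_{l-1})]|^2+|[u(x_l)]|^2+|[\tfrac{du}{dx}(x_{l-1})]|^2+|[\tfrac{du}{dx}(x_l)]|^2\right)$. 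Summing over $l$, each interior jump is counted by its two adjacent elements, which produces the claimed estimate after absorbing the factor $2$ into $C_\epsilon$.

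The main obstacle, and the only place the hypothesis $W=O(1/\epsilon)$ enters, is keeping $C_\epsilon$ finite despite the negative powers of $h$ (up to $h^{-3}$) produced by the second-derivative scaling against the $\epsilon$-weights of the norm. Since $N\propto W$ gives $h\sim W^{-1}$, the constraint $W=O(1/\epsilon)$ reads $\epsilon\lesssim h$, and this is exactly what lets each $\epsilon$-weighted inverse power of $h$ be dominated by a constant (for instance $\epsilon^{4}h^{-3}\lesssim h$ and $\epsilon^{2}h^{-1}\lesssim h$ once $\epsilon\lesssim h$). I would carry out this book-keeping carefully, also noting that every derivative-jump contribution carries an extra factor $h^2$ from the chain-rule weight in the Hermite coefficients, so those terms are controlled by an even smaller constant. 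This confirms that $C_\epsilon$ stays bounded precisely in the regime $W=O(1/\epsilon)$, as asserted.
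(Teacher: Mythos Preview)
The paper does not actually prove this lemma: it states that ``the proof of which is similar to the proof of Lemma 2.1 of~\cite{KJ} and hence omitted.'' So there is no in-paper argument to compare against directly. That said, your construction is the standard one for this kind of result and is almost certainly what the referenced proof does in spirit: build, on each element, a low-degree (here cubic Hermite) corrector whose endpoint values and first derivatives absorb half of each interior jump, so that $w=u+v$ is $C^1$ across every node and hence in $H^2(\Omega)$ in one dimension, while $v_1(a)=v_N(b)=0$ leaves the boundary data of $u$ untouched. The scaling analysis you sketch---passing from $I_0$ to $\Omega_l$ and then weighting by $1,\epsilon^2,\epsilon^4$---is correct, and you have pinpointed the only place the hypothesis $W=O(1/\epsilon)$ is needed: with $N\propto W$ one has $h\sim W^{-1}\gtrsim\epsilon$, so the worst factor $\epsilon^4 h^{-3}$ is bounded by $\epsilon$, and the derivative-jump contributions, which carry an extra $h^2$ from the chain rule in the Hermite coefficients, are even smaller.

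One small point worth tightening when you write this out: the lemma asks for a constant $C_\epsilon$ that may depend on $\epsilon$ but should not depend on $W$ (equivalently on $h$). Your argument produces bounds like $h$, $\epsilon^2 h^{-1}$, $\epsilon^4 h^{-3}$; under $h\gtrsim\epsilon$ these are each $\le C$ with $C$ absolute, so in fact you obtain something slightly stronger than stated. It is worth saying this explicitly rather than leaving a reader to wonder whether hidden $h$-dependence survives in $C_\epsilon$. Apart from that, the proposal is correct and complete.
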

%=================================================================
We are now in a position to prove the stability estimates for the $p$ and $hp$-versions of the least-squares spectral
element method examined in this paper. First we state the stability theorem for the $p$-version of the method.
\begin{thm}\label{thm3.1}
Consider the elliptic boundary layer problem (2.2a)-(2.2b). If $p$-version of the spectral element method is used then
there exists a constant $C_\epsilon>0$ depending on $\epsilon$ such that
\begin{align}\label{3.11}
\|u\|^{2}_{2,\epsilon,\Omega} &\leq C_\epsilon\mathcal{V}_p^{W}(\{u(x)\}),
\end{align}
provided $W=O\left(\frac{1}{\epsilon}\right)$.
\end{thm}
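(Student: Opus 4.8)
The plan is to reduce the discrete stability estimate to the a priori regularity estimate of Theorem~\ref{thm1}. The observation underlying every least-squares formulation is that a spectral element function is itself the \emph{exact} solution of the boundary value problem (2.2a)--(2.2b) with data given by its own residuals, and the quadratic form $\mathcal{V}_p^W$ is precisely the squared $L^2$ norm of that data. Since in the $p$-version we take $N=1$, the domain $\Omega$ is a single element, there are no inter-element nodes, and the jump contributions appearing in (\ref{3.9}) are absent; consequently Lemma~\ref{lem1} is not needed here and the argument becomes a direct application of the continuous stability bound.

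Concretely, I would fix $u\in\mathcal{S}^W(\Omega)$, i.e. $u$ is a polynomial of degree at most $W$ on $\Omega$ (as the pullback of $\hat u$ under the affine map $M_1$ it is again polynomial, hence $C^\infty$ and in particular in $H^2_\epsilon(\Omega)$), and set
\begin{align*}
f := \mathcal{L}u = -\epsilon^2 u'' + u \in L^2(\Omega), \qquad \alpha := u(a), \qquad \beta := u(b).
\end{align*}
Then $u$ satisfies (2.2a) with right-hand side $f$ together with the boundary values $\alpha,\beta$; by uniqueness $u$ \emph{is} the solution associated with this self-generated data, so Theorem~\ref{thm1} applies and yields
\begin{align*}
\|u\|_{2,\epsilon,\Omega} \le \|f\|_{0,\Omega} + C(\epsilon)\big(|\alpha| + |\beta|\big), \qquad C(\epsilon) \le 2\sqrt{\tfrac{\epsilon^2}{2} + \tfrac{2}{3}}.
\end{align*}
Squaring and applying the elementary inequalities $(p+q)^2\le 2(p^2+q^2)$ and $(|\alpha|+|\beta|)^2\le 2(|\alpha|^2+|\beta|^2)$ would then give
\begin{align*}
\|u\|^2_{2,\epsilon,\Omega} \le 2\|f\|_{0,\Omega}^2 + 4C(\epsilon)^2\big(|\alpha|^2 + |\beta|^2\big) \le C_\epsilon\Big(\|\mathcal{L}u\|^2_{0,\Omega} + |u(a)|^2 + |u(b)|^2\Big),
\end{align*}
with $C_\epsilon = \max\{2,\,4C(\epsilon)^2\}$, which is exactly $C_\epsilon\,\mathcal{V}_p^W(\{u(x)\})$ by (\ref{3.8}).

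The computation is short, so the only points requiring care are conceptual rather than technical. The step one must justify is that the continuous a priori estimate may be invoked for the discrete function $u$; this is legitimate precisely because $\mathcal{L}u=f$ holds identically and $u$ attains the prescribed endpoint values, so $u$ is genuinely the unique solution for the data it generates. The one place where robustness could in principle be lost is the $\epsilon$-dependence of $C_\epsilon$, but the explicit bound $C(\epsilon)\le 2\sqrt{\epsilon^2/2+2/3}$ shows $C(\epsilon)$ stays bounded for $0<\epsilon\le 1$, so $C_\epsilon$ is in fact bounded independently of $\epsilon$; the eventual degradation of the method comes from the approximation side through the differentiability estimate (\ref{3.2}), not from this stability bound. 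Finally, the hypothesis $W=O(1/\epsilon)$ is inherited from the general framework and from Lemma~\ref{lem1}, but for the single-element $p$-version it plays no essential role in the estimate and serves only to mark the regime in which the polynomial degree can resolve the layer. Thus the main obstacle is not in the analysis but in arranging the least-squares functional so that its terms coincide with the data on the right-hand side of the regularity estimate; once that alignment is in place, the stability bound is immediate.
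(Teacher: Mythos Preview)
Your argument is correct and rests on the same key ingredient as the paper's proof, namely the continuous a~priori estimate $\|w\|_{2,\epsilon,\Omega}^{2}\le C_\epsilon(\|\mathcal{L}w\|_{0,\Omega}^{2}+|w(a)|^{2}+|w(b)|^{2})$ for $w\in H^{2}(\Omega)$, which you draw from Theorem~\ref{thm1}. The difference is presentational: the paper structures the proof to mirror the $hp$-case and therefore invokes Lemma~\ref{lem1} to produce a correction $v$ with $u+v=w\in H^{2}(\Omega)$, then bounds $\|\mathcal{L}w\|^{2}$ and $\|v\|_{2,\epsilon}^{2}$ separately before reassembling. For $N=1$ that machinery is vacuous --- the sum on the right of (\ref{3.10}) is empty, so $v=0$ and $w=u$ --- and you have simply recognised this up front and applied the continuous estimate directly to $u$. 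Your observation that the constant $C(\epsilon)$ from Theorem~\ref{thm1} is bounded for $0<\epsilon\le 1$, and that the hypothesis $W=O(1/\epsilon)$ is inert in the single-element setting, is a useful sharpening that the paper does not make explicit.
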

\begin{proof}
By Lemma $\ref{lem1}$, there exists $v(x)$ such that $v(a)=v(b)=0$ and $u+v=w\in H^{2}(\Omega)$. So that for
$W=O\left(\frac{1}{\epsilon}\right)$, we have
\begin{align}\label{3.12}
\|w\|^{2}_{2,\epsilon,\Omega} &\leq C_\epsilon(\|\mathcal{L}w\|^{2}_{0,\Omega}+|w(a)|^{2}+|w(b)|^{2}).
\end{align}
Now,
\begin{align*}
\|\mathcal{L}w\|^{2}_{0,\Omega} &=\|\mathcal{L}(u+v)\|^{2}_{0,\Omega} \leq 2\|\mathcal{L}u\|^{2}_{0,\Omega}
+2\|\mathcal{L}v\|^{2}_{0,\Omega}\leq C_\epsilon\left(\|\mathcal{L}u\|^{2}_{0,\Omega}+\|v\|^{2}_{2,\epsilon,\Omega}\right).
\end{align*}
From this, we can write
\begin{align}\label{3.13}
\|w\|^{2}_{2,\epsilon,\Omega} &\leq C_\epsilon\left(\|\mathcal{L}u\|^{2}_{0,\Omega}+\|v\|^{2}_{2,\epsilon,\Omega}
+|u(a)|^{2}+|u(b)|^{2}\right).
\end{align}
since $w=u+v$ and $v(a)=v(b)=0$.
\newline
There exists a constant $K$ such that
\begin{align}\label{3.14}
\|u\|^{2}_{2,\epsilon,\Omega} &\leq K\left(\|w\|^{2}_{2,\epsilon,\Omega}+\|v\|^{2}_{2,\epsilon,\Omega}\right).
\end{align}
Using Lemma \ref{lem1} for $N=1$ and (\ref{3.13}) in (\ref{3.14}), we get
\begin{align*}
\|u\|^{2}_{2,\epsilon,\Omega} &\leq C_\epsilon\left(\|\mathcal{L}u\|^{2}_{0,\Omega}+|u(a)|^{2}+|u(b)|^{2}\right)
\leq C_\epsilon\mathcal{V}_p^{W}(\{u(x)\}).
\end{align*}
\end{proof}
Next, we state the corresponding result for $hp$-version of the method.
\begin{thm}\label{thm3.2}
Consider the elliptic boundary layer problem (2.2a)-(2.2b). If $hp$-version of the spectral element method is used
then there exists a constant $C_\epsilon>0$ depending on $\epsilon$ such that the estimate
\begin{align}\label{3.15}
\sum_{l=1}^{N}\|u_{l}\|^{2}_{2,\epsilon,\Omega_{l}} &\leq C_\epsilon\mathcal{V}_{hp}^{W}(\{u_{l}(x)\}_{l})
\end{align}
holds, provided $W=O\left(\frac{1}{\epsilon}\right)$.
\end{thm}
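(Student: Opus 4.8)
The plan is to follow the same scheme as in the proof of Theorem \ref{thm3.1}, the only genuine difference being that the correction function supplied by Lemma \ref{lem1} is now nonzero, since it must repair the jumps at the $N-1$ interior inter-element nodes. First I would invoke Lemma \ref{lem1} to obtain $v=\{v_l\}_l$ with $v_l(a)=v_l(b)=0$, $v_l\in H^2(\Omega_l)$, and $w=u+v\in H^2(\Omega)$, so that $w$ is a \emph{globally} $H^2$ function on all of $\Omega$ while retaining the boundary values $w(a)=u(a)$ and $w(b)=u(b)$. Because $w\in H^2(\Omega)$, I can apply the coercivity estimate inherited from the regularity estimate of Theorem \ref{thm1} (reinterpreted exactly as in (\ref{3.12}), with data $f=\mathcal{L}w$ and boundary values $w(a),w(b)$) to obtain, for $W=O\left(\frac{1}{\epsilon}\right)$,
\begin{align*}
\|w\|^{2}_{2,\epsilon,\Omega} &\leq C_\epsilon\left(\|\mathcal{L}w\|^{2}_{0,\Omega}+|w(a)|^{2}+|w(b)|^{2}\right).
\end{align*}

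Next I would pass everything to the element-wise picture. Since $w\in H^2(\Omega)$, the weighted norm and the residual norm are additive over elements, $\|w\|^{2}_{2,\epsilon,\Omega}=\sum_{l=1}^{N}\|w_l\|^{2}_{2,\epsilon,\Omega_l}$ and $\|\mathcal{L}w\|^{2}_{0,\Omega}=\sum_{l=1}^{N}\|\mathcal{L}w_l\|^{2}_{0,\Omega_l}$. Writing $w_l=u_l+v_l$ and using $(a+b)^2\leq 2a^2+2b^2$ together with the bound $\|\mathcal{L}v_l\|_{0,\Omega_l}\leq C_\epsilon\|v_l\|_{2,\epsilon,\Omega_l}$ (which holds since $\mathcal{L}v_l=-\epsilon^2 v_l''+v_l$), I would get
\begin{align*}
\|\mathcal{L}w\|^{2}_{0,\Omega} &\leq C_\epsilon\left(\sum_{l=1}^{N}\|\mathcal{L}u_l\|^{2}_{0,\Omega_l}+\sum_{l=1}^{N}\|v_l\|^{2}_{2,\epsilon,\Omega_l}\right).
\end{align*}
Combining this with the coercivity estimate, with $w(a)=u(a)$, $w(b)=u(b)$, and with the triangle inequality $\sum_{l=1}^{N}\|u_l\|^{2}_{2,\epsilon,\Omega_l}\leq K\left(\|w\|^{2}_{2,\epsilon,\Omega}+\sum_{l=1}^{N}\|v_l\|^{2}_{2,\epsilon,\Omega_l}\right)$, I arrive at
\begin{align*}
\sum_{l=1}^{N}\|u_l\|^{2}_{2,\epsilon,\Omega_l} &\leq C_\epsilon\left(\sum_{l=1}^{N}\|\mathcal{L}u_l\|^{2}_{0,\Omega_l}+\sum_{l=1}^{N}\|v_l\|^{2}_{2,\epsilon,\Omega_l}+|u(a)|^{2}+|u(b)|^{2}\right).
\end{align*}

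The final step is to eliminate the correction term $\sum_{l=1}^{N}\|v_l\|^{2}_{2,\epsilon,\Omega_l}$. For this I would invoke the quantitative bound (\ref{3.10}) of Lemma \ref{lem1}, valid precisely because $W=O\left(\frac{1}{\epsilon}\right)$, which controls this sum by the jump contributions $\sum_{l=1}^{N-1}\left(|[u(x_l)]|^{2}+\left|\left[\frac{du}{dx}(x_l)\right]\right|^{2}\right)$. Substituting, the entire right-hand side collapses exactly into $C_\epsilon\mathcal{V}_{hp}^{W}(\{u_l(x)\}_l)$, which is the desired estimate (\ref{3.15}).

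The argument is structurally identical to the $p$-version, and the genuinely new content is concentrated in Lemma \ref{lem1}: the delicate part is that the lifting $v$ must simultaneously annihilate the jumps of both $u$ and $u'$ across all $N-1$ interfaces while keeping its weighted $H^2$ norm controlled by exactly those jumps, and — most critically — with a constant $C_\epsilon$ that does not degrade with the number of elements $N$ (here $N\propto W$), which is only achievable under the coupling $W=O\left(\frac{1}{\epsilon}\right)$. Once Lemma \ref{lem1} is granted, the remaining steps above are routine triangle- and Young-type inequalities together with the additivity of the norms over elements, which is available because $w$ is globally $H^2$.
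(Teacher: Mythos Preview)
Your proposal is correct and follows essentially the same approach as the paper's own proof: invoke Lemma~\ref{lem1} to build $w=u+v\in H^2(\Omega)$, apply the global coercivity estimate to $w$, split $\|\mathcal{L}w\|_{0,\Omega}^2$ element-wise via the triangle inequality and the bound $\|\mathcal{L}v_l\|_{0,\Omega_l}\leq C_\epsilon\|v_l\|_{2,\epsilon,\Omega_l}$, and then eliminate $\sum_l\|v_l\|_{2,\epsilon,\Omega_l}^2$ using (\ref{3.10}). The only cosmetic difference is that the paper substitutes (\ref{3.10}) into the bound on $\|\mathcal{L}w\|_{0,\Omega}^2$ before combining with the triangle inequality, whereas you carry the correction term through and apply (\ref{3.10}) once at the end; the two orderings are equivalent.
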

%================================================================
\begin{proof}
The proof is similar to the proof of the stability Theorem 2.2 in~\cite{KJ}. By Lemma $\ref{lem1}$, there exists
$\{v_{l}(x)\}$ such that $v_{l}(a)=v_{l}(b)=0$ and $u+v=w\in H^{2}(\Omega)$. So for $w\in H^{2}(\Omega)$,
\begin{align}\label{3.16}
\|w\|^{2}_{2,\epsilon,\Omega} &\le C_\epsilon(\|\mathcal{L}w\|^{2}_{0,\Omega}+|w(a)|^{2}+|w(b)|^{2}),
\end{align}
provided $W=O\left(\frac{1}{\epsilon}\right)$. Now,
\begin{align*}
\|\mathcal{L}w\|^{2}_{0,\Omega} &=\|\mathcal{L}(u+v)\|^{2}_{0,\Omega} \leq 2\|\mathcal{L}u\|^{2}_{0,\Omega}+2\|\mathcal{L}v\|^{2}_{0,\Omega}
\leq C_\epsilon\left(\sum_{l=1}^{N}\|\mathcal{L}u_{l}\|^{2}_{0,\Omega_{l}}+\sum_{l=1}^{N}\|v_{l}\|^{2}_{2,\epsilon,\Omega_{l}}\right).
\end{align*}
Using Lemma $\ref{lem1}$ in the last term of the above inequality, we get
\begin{align*}
\|\mathcal{L}w\|^{2}_{0,\Omega} &\leq C_\epsilon\left(\sum_{l=1}^{N}\|\mathcal{L}u_{l}\|^{2}_{0,\Omega_{l}}
+\sum_{l=1}^{N-1}\left(|[u(x_l)]|^{2}+\left|\left[\frac{du}{dx}(x_{l})\right]\right|^{2}\right)\right).
\end{align*}
From this, we get
\begin{align}\label{3.17}
\|w\|^{2}_{2,\epsilon,\Omega} &\leq C_\epsilon\left(\sum_{l=1}^{N}||\mathcal{L}u_{l}||^{2}_{0,\Omega_{l}}
+\sum_{l=1}^{N-1}\left(|[u(x_l)]|^{2}+\left|\left[\frac{du}{dx}(x_{l})\right]\right|^{2}\right)+|u(a)|^{2}+|u(b)|^{2}\right).
\end{align}
since $w=u+v$ and $v(a)=v(b)=0$.
\newline
There exists a constant $K$ such that
\begin{align}\label{3.18}
\sum_{l=1}^{N}\|u\|^{2}_{2,\epsilon,\Omega_{l}} &\leq K\left(\|w\|^{2}_{2,\epsilon,\Omega}+\sum_{l=1}^{N}\|v\|^{2}_{2,\epsilon,\Omega_{l}}\right).
\end{align}
Therefore, using Lemma \ref{lem1} and (\ref{3.17}) in (\ref{3.18}) we obtain
\begin{align*}
\sum_{l=1}^{N}\|u\|^{2}_{2,\epsilon,\Omega_{l}} &\leq C_\epsilon\left(\sum_{l=1}^{N}\|\mathcal{L}u_{l}\|^{2}_{0,\Omega_{l}}
+\sum_{l=1}^{N-1}\left(|[u(x_l)]|^{2}+\left|\left[\frac{du}{dx}(x_{l})\right]\right|^{2}\right)+|u(a)|^{2}+|u(b)|^{2}\right)\\
&\leq C_\epsilon\mathcal{V}_{hp}^{W}(\{u_{l}(x)\}_{l}).
\end{align*}
\end{proof}

\section{Numerical Scheme and Preconditioners}\label{sec4}
In this section, we describe numerical schemes based on the stability estimates (Theorems $\ref{thm3.1}$ and $\ref{thm3.2}$)
proved in Section \ref{sec3} and discuss preconditioning techniques for the $p$ and $hp$-versions.

\subsection{Numerical Scheme}
In Section \ref{sec3}, we had divided the domain $\Omega=(a,b)$ into $N$ elements denoted by $\Omega_l$, $l=1,2,\cdots,N$
and introduced a spectral element representation of the function $u$ on each of the elements $\Omega_l$ in the subdivision.

To formulate the numerical scheme for the $p$ and $hp$-versions we shall define two functionals $\mathcal{R}_p^{W}(\{u(x)\})$
and $\mathcal{R}_{hp}^{W}(\{u_{l}(x)\}_{l})$, closely related to the quadratic forms $\mathcal V_p^{W}(\{u(x)\})$ and
$\mathcal V_{hp}^{W}(\{u_{l}(x)\}_{l})$ respectively. Let $f_{l}(x)=f|_{\Omega_{l}}$, $l=1,2,\ldots,N$.

First, we define a functional for the $p$-version by
\begin{align}\label{4.1}
\mathcal{R}_p^{W}(u(x)) &=\|\mathcal{L}u(x)-f\|^{2}_{0,\Omega}+|u(a)-\alpha|^{2}+|u(b)-\beta|^{2}.
\end{align}
Our numerical scheme for the $p$-version may now be formulated as follows:
\begin{prop}
Find $z\in \mathcal{S}^{W}(\Omega)$ which minimizes the functional $\mathcal{R}_p^{W}(u)$ over all $u\in\mathcal{S}^{W}(\Omega)$.
\end{prop}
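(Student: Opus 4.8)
The plan is to read this proposition as the assertion that the least-squares minimization problem is \emph{well posed}, i.e.\ that there exists a unique $z\in\mathcal{S}^{W}(\Omega)$ attaining the infimum of $\mathcal{R}_p^{W}$ over $\mathcal{S}^{W}(\Omega)$. Since for the $p$-version $\mathcal{S}^{W}(\Omega)$ is finite-dimensional (polynomials of degree at most $W$, hence dimension $W+1$) and $\mathcal{R}_p^{W}$ is a nonnegative quadratic functional, the natural route is to pass to the associated first-order (normal) equations and to extract positive definiteness from the stability estimate of Theorem~\ref{thm3.1}.

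First I would introduce the symmetric bilinear form and the linear functional
\begin{align*}
B(u,v) &=\langle\mathcal{L}u,\mathcal{L}v\rangle_{0,\Omega}+u(a)v(a)+u(b)v(b),\qquad
F(v)=\langle\mathcal{L}v,f\rangle_{0,\Omega}+\alpha\,v(a)+\beta\,v(b),
\end{align*}
where $\langle\cdot,\cdot\rangle_{0,\Omega}$ is the $L^{2}(\Omega)$ inner product. Expanding the squares in (\ref{4.1}) gives
\begin{align*}
\mathcal{R}_p^{W}(u) &=B(u,u)-2F(u)+\left(\|f\|^{2}_{0,\Omega}+\alpha^{2}+\beta^{2}\right),
\end{align*}
and one recognizes $B(u,u)=\mathcal{V}_p^{W}(\{u\})$, the quadratic form already studied in (\ref{3.8}). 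Minimizing $\mathcal{R}_p^{W}$ over $\mathcal{S}^{W}(\Omega)$ is then equivalent, by the standard stationarity condition for a quadratic functional, to finding $z\in\mathcal{S}^{W}(\Omega)$ such that $B(z,v)=F(v)$ for every $v\in\mathcal{S}^{W}(\Omega)$.

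The crucial step is to show that $B$ is in fact an inner product on $\mathcal{S}^{W}(\Omega)$, i.e.\ positive definite. This is exactly where Theorem~\ref{thm3.1} enters: under the resolution condition $W=O(1/\epsilon)$ we have $\|u\|^{2}_{2,\epsilon,\Omega}\le C_\epsilon\,B(u,u)$, and since the parameter-dependent norm $\|\cdot\|_{2,\epsilon,\Omega}$ controls the $L^{2}$ norm $\epsilon^{4}\|u\|^{2}_{0,\Omega}$, the vanishing $B(u,u)=0$ forces $\|u\|_{0,\Omega}=0$, hence $u=0$. Thus $B$ is symmetric positive definite on the finite-dimensional space $\mathcal{S}^{W}(\Omega)$, and existence and uniqueness of the minimizer $z$ follow from the Lax--Milgram lemma; equivalently, the Gram matrix of $B$ in the monomial basis $\{\xi^{i}\}_{i=0}^{W}$ of (\ref{3.4}) is then invertible, so the normal equations have a unique solution, which by convexity of the sum-of-squares functional $\mathcal{R}_p^{W}$ is the global minimizer.

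The main obstacle is precisely the positive definiteness of $B$: a priori $\mathcal{V}_p^{W}(\{u\})$ could conceivably vanish on some nonzero polynomial (a null mode of $\mathcal{L}$ compatible with the two point constraints at $a$ and $b$), and it is the stability estimate, valid under $W=O(1/\epsilon)$, that excludes such a spurious kernel and thereby guarantees invertibility of the discrete system. Everything else in the argument is routine finite-dimensional quadratic minimization.
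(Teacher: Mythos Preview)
Your reading of the proposition as a well-posedness assertion is reasonable, and your argument via the normal equations $B(z,v)=F(v)$ is correct. Note, however, that in the paper this ``Proposition'' is not treated as a statement with a proof at all: it is simply the \emph{formulation} of the numerical scheme for the $p$-version, and the paper immediately declares the minimizer to be unique without further justification. So there is no proof in the paper to compare against; you have supplied one where the authors were content to state the scheme.

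One point worth tightening in your argument: invoking Theorem~\ref{thm3.1} (and its hypothesis $W=O(1/\epsilon)$) to obtain positive definiteness of $B$ is considerable overkill. If $B(u,u)=\mathcal{V}_p^{W}(\{u\})=0$ then in particular $\mathcal{L}u=-\epsilon^{2}u''+u=0$ on $\Omega$. But for a polynomial $u$ of degree $d\ge 0$ the leading term of $-\epsilon^{2}u''+u$ is the leading term of $u$ itself (since $u''$ has degree at most $d-2$), so $\mathcal{L}u=0$ already forces $u\equiv 0$ without any appeal to the boundary values or to the stability estimate. Thus $B$ is an inner product on $\mathcal{S}^{W}(\Omega)$ for every $W$ and every $\epsilon>0$, and the minimizer exists and is unique unconditionally. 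Your route through Theorem~\ref{thm3.1} is valid but imports an unnecessary restriction on $W$.
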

Thus, our approximate solution is the unique $z\in \mathcal{S}^{W}(\Omega)$, which minimize the functional $\mathcal{R}_p^{W}(u)$
over all $u\in \mathcal{S}^{W}(\Omega)$. In other words,

\emph{The numerical scheme seeks a solution which minimizes the sum of the squared norm of the residuals in the differential
equation and the sum of the squares of the residuals in the boundary conditions}.

Next, we define a functional for the $hp$-version by
\begin{align}\label{4.2}
\mathcal{R}_{hp}^{W}(\{u_{l}\}_{l}) &=\sum_{l=1}^{N}\|\mathcal{L}u_{l}(x)-f_{l}\|^{2}_{0,\Omega_{l}}
+\sum_{l=1}^{N-1}\left(|[u(x_{l})]|^{2}+\left|\left[\frac{du}{dx}(x_{l})\right]\right|^{2}\right)\notag\\
&+|u(a)-\alpha|^{2}+|u(b)-\beta|^{2}.
\end{align}
Thus, the numerical scheme for the $hp$-version may now be formulated as follows:
\begin{prop}
Find $\{z_l\}_l\in \mathcal{S}^{W}(\Omega)$ which minimizes the functional $\mathcal{R}_{hp}^{W}(\{u_l\}_l)$ over all $\{u_l\}_l\in\mathcal{S}^{W}(\Omega)$.
\end{prop}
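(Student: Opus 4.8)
The plan is to show the minimization problem is well posed: a unique minimizer $\{z_l\}_l\in\mathcal{S}^W(\Omega)$ exists and is characterized by the associated normal equations (mirroring the uniqueness asserted for the $p$-version in the text). The whole argument rests on recognizing $\mathcal{R}_{hp}^W$ as a strictly convex quadratic functional on the finite-dimensional space $\mathcal{S}^W(\Omega)$, with strict convexity supplied by the stability estimate (\ref{3.15}) of Theorem \ref{thm3.2}.

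First I would fix coordinates. Each $u_l$ is determined by its coefficient vector $(\alpha_0^l,\dots,\alpha_W^l)$ through (\ref{3.4})--(\ref{3.5}), so $\mathcal{S}^W(\Omega)$ is isomorphic to $\mathbb{R}^{N(W+1)}$; write $\mathbf{u}$ for the stacked coefficient vector. Since $\mathcal{L}$ is linear, the jumps $[u(x_l)]$ and $[u'(x_l)]$ are linear in $\mathbf{u}$ by (\ref{3.6}), and the boundary evaluations $u(a)=\hat u_1(-1)$, $u(b)=\hat u_N(1)$ are linear by (\ref{3.7}). Hence every term of $\mathcal{R}_{hp}^W$ is the square of an affine functional of $\mathbf{u}$, and summing yields
\begin{equation*}
\mathcal{R}_{hp}^W(\mathbf{u})=\langle A\mathbf{u},\mathbf{u}\rangle-2\langle \mathbf{b},\mathbf{u}\rangle+c,
\end{equation*}
where $A$ is symmetric positive semidefinite with associated quadratic form exactly $\mathcal{V}_{hp}^W(\{u_l\}_l)$ (the homogeneous form obtained by deleting the data $f,\alpha,\beta$), and $\mathbf{b},c$ encode the data.

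Next I would upgrade $A$ to positive definiteness. Suppose $\langle A\mathbf{u},\mathbf{u}\rangle=\mathcal{V}_{hp}^W(\{u_l\}_l)=0$. Then, since $W=O(1/\epsilon)$, estimate (\ref{3.15}) gives $\sum_{l=1}^N\|u_l\|_{2,\epsilon,\Omega_l}^2\le C_\epsilon\cdot 0=0$, forcing $u_l\equiv 0$ for every $l$ and hence $\mathbf{u}=0$. Thus $A$ is symmetric positive definite, therefore invertible, and $\mathcal{R}_{hp}^W$ is strictly convex and coercive. A strictly convex coercive quadratic on finite-dimensional Euclidean space attains its infimum at a unique point, namely the unique critical point $\nabla\mathcal{R}_{hp}^W(\mathbf{u})=0$, i.e.\ the solution of the normal equations $A\mathbf{u}=\mathbf{b}$. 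Invertibility of $A$ then gives the unique minimizer $\mathbf{u}=A^{-1}\mathbf{b}$, which in turn determines $\{z_l\}_l\in\mathcal{S}^W(\Omega)$ through (\ref{3.4})--(\ref{3.5}).

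The one genuine point of substance is the positive-definiteness of $A$, and this is exactly where Theorem \ref{thm3.2} does the work; everything else is the standard theory of quadratic minimization. In effect the hard part has already been dispatched upstream in the stability estimate, so the remaining obstacle is only the routine bookkeeping of verifying that each residual term is indeed a squared affine functional of the coefficients, together with the standing assumption $W=O(1/\epsilon)$ under which (\ref{3.15}) is valid.
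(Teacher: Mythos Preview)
Your argument is correct, but there is nothing in the paper to compare it against: the paper does not supply a proof of this proposition. In the paper, Propositions~4.1 and~4.2 are not theorems with proofs; they are simply the \emph{formulations} of the numerical schemes. Immediately after each, the paper asserts in one line that the minimizer is unique and then moves on to describe the scheme in words and to discuss preconditioning. So what you have written is a justification of the well-posedness claim that the paper leaves implicit.

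Your route via Theorem~\ref{thm3.2} is valid under the standing hypothesis $W=O(1/\epsilon)$, and it is the natural one given the structure of the paper. It is perhaps worth noting, though, that positive definiteness of the homogeneous form $\mathcal{V}_{hp}^W$ on the polynomial space can be seen more directly, without invoking the stability estimate or the restriction on $W$: if $\mathcal{V}_{hp}^W(\{u_l\}_l)=0$ then in particular $\|\mathcal{L}u_l\|_{0,\Omega_l}=0$ for each $l$, i.e.\ the polynomial $u_l$ satisfies $u_l=\epsilon^2 u_l''$ identically on $\Omega_l$; comparing degrees forces $u_l\equiv 0$. This elementary observation gives existence and uniqueness of the minimizer unconditionally in $W$ and $\epsilon$, whereas your argument inherits the hypothesis of Theorem~\ref{thm3.2}. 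Either way, the conclusion you reach---that $\mathcal{R}_{hp}^W$ is a strictly convex coercive quadratic on a finite-dimensional space and hence has a unique minimizer determined by the normal equations $A\mathbf{u}=\mathbf{b}$---is exactly what the paper needs and tacitly assumes.
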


Thus, the desired spectral element solution is the unique $(\{z_l\}_l)\in \mathcal{S}^{W}(\Omega)$, which minimize $\mathcal{R}_p^{W}(\{u_l\}_l)$
over all $\{u_l\}_l\in \mathcal{S}^{W}(\Omega)$. In other words,

\emph{The numerical scheme seeks a solution which minimizes the sum of the squared norm of the residuals in the differential
equation and the sum of the squares of the residuals in the boundary conditions and enforce continuity by adding a term which
measures the sum of the squared norms of the jump in the function and its derivatives at the inter element nodes}.

Note that the numerical schemes for the $p$ and $hp$-versions are essentially defined as a least-squares formulation.
The normal equations arising from this formulation will be solved using the preconditioned conjugate gradient method
(PCGM).
The calculation of integrals in the residual computations is done via Gauss-Lobatto-Legendre (GLL) quadratures and
similar to one dimensional problems as described in~\cite{DB,KJ}, therefore we omit the details here.

\subsection{Preconditioners}
The system of linear equations obtained from the least-squares formulation is symmetric and positive definite but depend
on the boundary layer parameter, which may be arbitrarily small, and therefore the resulting linear system is usually
ill-conditioned. This will slow down the convergence of the PCGM which is used as a solver to solve this system. Therefore
a suitable preconditioner is required to obtain the solution which is efficient and robust, with respect to the perturbation
parameter. We now describe boundary layer preconditioners for the $p$ and $hp$-versions which are similar to those presented
in~\cite{DBR}. More specifically, we seek a matrix $M$ that is spectrally equivalent (see~\cite{SCHW}, Section 4.7) to the
matrix $A^TA$ i.e.,
\[C_1 \langle A^TA u, u \rangle \leq \langle M u, u \rangle \leq C_2 \langle A^TA u, u \rangle.\]
Thus, we can use $M$ as a preconditioner and the condition number of the preconditioned system satisfies
\[\kappa(M^{-1}A^TA) \leq \frac{C_2}{C_1}.\]
We define a norm equivalent quadratic form $\mathcal{U}_p^{W}(u(x))$ by
\begin{align}\label{4.3}
\mathcal{U}_p^{W}(u(x)) &=\epsilon^4\|u^{\prime\prime}\|^{2}_{0,\Omega}+\|u\|^{2}_{0,\Omega}.
\end{align}
Using the stability estimate Theorem $3.1$, it follows that
\begin{align}\label{4.4}
\mathcal{U}_p^{W}(u(x)) &\leq C_\epsilon\mathcal{V}_p^{W}(u(x)),
\end{align}
where $C_\epsilon=\frac{1}{1-\epsilon^2}$. Now, using Cauchy-Schwarz and Young's inequality
\begin{align*}%\label{4.5}
\|\mathcal{L}u\|^{2}_{0,\Omega} &\leq \epsilon^4\|u^{\prime\prime}\|^{2}_{0,\Omega}+2\epsilon^2\|u^{\prime\prime}\|_{0,\Omega}\|u\|_{0,\Omega}+\|u\|^{2}_{0,\Omega}\\
&\leq \epsilon^4\|u^{\prime\prime}\|^{2}_{0,\Omega}+\epsilon^2\left(\epsilon^2\|u^{\prime\prime}\|^2_{0,\Omega}+\frac{1}{\epsilon^2}\|u\|^2_{0,\Omega}\right)+\|u\|^{2}_{0,\Omega},\\
%&\leq (\epsilon^4+\epsilon^2)\|u^{\prime\prime}\|^{2}_{0,\Omega}+(1+\epsilon^2)\|u\|^{2}_{0,\Omega},\\
&\leq \left((1+\epsilon^2)\epsilon^4\|u^{\prime\prime}\|^{2}_{0,\Omega}+\left(1+\frac{1}{\epsilon^2}\right)\|u\|^{2}_{0,\Omega}\right)\\
&\leq \max\left(1+\epsilon^2,1+\frac{1}{\epsilon^2}\right)\left(\epsilon^4\|u^{\prime\prime}\|^{2}_{0,\Omega}+\|u\|^{2}_{0,\Omega}\right).
\end{align*}
Thus, there exists a constant $K_1=1+\frac{1}{\epsilon^2}$ (independent of $W$) such that
\begin{align}\label{4.5}
\|\mathcal{L}u\|^{2}_{0,\Omega}\leq K_1\mathcal{U}_p^{W}(u(x)).
\end{align}
Moreover,
\begin{align}\label{4.6}
|u(a)|\leq \|u\|_{0,\Omega}, \quad |u(b)|\leq \|u\|_{0,\Omega}.
\end{align}
Combining (\ref{4.5}) and (\ref{4.6}), it follows that there exists a constant $K_2$ such that
\begin{align}\label{4.7}
\mathcal{V}_p^{W}(u(x)) &\leq K_{2}\mathcal{U}_p^{W}(u(x))
\end{align}
where $K_{2}=K_{1}+2$. Combining $(\ref{4.4})$ and $(\ref{4.7})$ we obtain,
\begin{align}\label{4.8}
K_\epsilon\mathcal{V}_p^{W}(u(x)) &\le \mathcal{U}_p^{W}(u(x))\leq C_\epsilon \mathcal{V}_p^{W}(u(x))
\end{align}
for all $u\in \mathcal{S}^{W}(\Omega)$, where $K_\epsilon=\frac{1}{K_2}$ is a constant. Thus, the quadratic forms $\mathcal{V}_p^{W}(u(x))$
and $\mathcal{U}_p^{W}(u(x))$ are spectrally equivalent. So we can use the quadratic form $\mathcal{U}_p^{W}(u(x))$ as a preconditioner for
the $p$-version. Moreover, the condition number of the preconditioned system will be $\kappa(M^{-1}A^TA)=\frac{C_\epsilon}{K_\epsilon}=O\left(\frac{1}{\epsilon^2}\right)$
provided $\epsilon\ll 1$.

Next, we consider preconditioners for the $hp$-version. We define the quadratic form,
\begin{align}\label{4.9}
\mathcal{U}_{hp}^{W}(\{u_{l}(x)\}_{l}) &=\sum_{l=1}^{N}\left(\epsilon^4\|u_l^{\prime\prime}\|^{2}_{0,\Omega_l}+\|u_l\|^{2}_{0,\Omega_l}\right).
\end{align}
Using Theorem $3.2$, it follows that
\begin{align}\label{4.10}
\mathcal{U}_{hp}^{W}(\{u_{l}(x)\}_{l}) &\leq C_\epsilon\mathcal{V}_{hp}^{W}(\{u_{l}(x)\}_{l}).
\end{align}
where $C_\epsilon=\frac{1}{1-\epsilon^2}$. At the same time using Cauchy-Schwarz and Young's inequality we can find positive constants $L_{1}$
and $L_2$ such that
\begin{align*}%\label{4.5}
\sum_{l=1}^{N}\|\mathcal{L}u_l\|^{2}_{0,\Omega_l} &\leq \sum_{l=1}^{N}\left(\epsilon^4\|u_l^{\prime\prime}\|^{2}_{0,\Omega_l}+2\epsilon^2\|u_l^{\prime\prime}\|_{0,\Omega_l}\|u_l\|_{0,\Omega_l}+\|u_l\|^{2}_{0,\Omega_l}\right)\\
&\leq \sum_{l=1}^{N}\left((1+\epsilon^2)\epsilon^4\|u_l^{\prime\prime}\|^{2}_{0,\Omega_l}+\left(1+\frac{1}{\epsilon^2}\right)\|u_l\|^{2}_{0,\Omega_l}\right).
\end{align*}
From (\ref{4.9}) and the above estimate, it follows that there exists a constant $L_1=1+\frac{1}{\epsilon^2}$ (independent of $W$) such that
\begin{align}\label{4.11}
\sum_{l=1}^{N}\|\mathcal{L}u_{l}\|^{2}_{0,\Omega_{l}} \leq L_{1}\mathcal{U}_{hp}^{W}(\{u_{l}(x)\}_{l}).
\end{align}
Using continuous Sobolev embedding results for bounded domains in one dimension~\cite{ADAM} (Chapter 4), there exist positive constants
$L_{2}$ and $L_{3}$ such that
\begin{subequations}\label{4.12}
\begin{align}
\sum_{l=1}^{N-1}\left|[u(x_l)]\right|^{2}&\leq L_{2}\|u_l\|^{2}_{1,\epsilon,\Omega_l},\quad \sum_{l=1}^{N-1}\left|\left[\frac{du}{dx}(x_l)\right]\right|^{2}
\leq L_{3}\|u_l\|^{2}_{2,\epsilon,\Omega_l},\label{4.12a}\\
|u(a)|^2&\leq \|u_l\|^{2}_{0,\Omega_l}, \qquad\qquad\quad\: |u(b)|^2\leq \|u_l\|^{2}_{0,\Omega_l}\label{4.12b}.
\end{align}
\end{subequations}
Combining the estimates (\ref{4.11}), (\ref{4.12a}) and (\ref{4.12b}) we obtain
\begin{align}\label{4.13}
\mathcal{V}_{hp}^{W}(\{u_{l}(x)\}_{l}) &\leq L_{4}\mathcal{U}_{hp}^{W}(\{u_{l}(x)\}_{l}),
\end{align}
where $L_{4}=\sum_{i=1}^{3}L_{i}+2$. Combining $(\ref{4.10})$ and $(\ref{4.13})$ it follows that there exists a constant $L_\epsilon$ such that
\begin{align}\label{4.14}
L_\epsilon\mathcal{V}_{hp}^{W}(\{u_{l}(x)\}_{l}) &\leq \mathcal{U}_{hp}^{W}(\{u_{l}(x)\}_{l})\le C_\epsilon \mathcal{V}_{hp}^{W}(\{u_{l}(x)\}_{l})
\end{align}
for all $\{u_{l}(x)\}_{l}\in \mathcal{S}^{W}(\Omega)$, where $L_\epsilon=\frac{1}{L_4}$ is a constant that depends on $\epsilon$ but independent
of $W$. Thus, the two forms $\mathcal{V}_{hp}^{W}(\{u_{l}(x)\}_{l})$ and $\mathcal{U}_{hp}^{W}(\{u_{l}(x)\}_{l})$ are spectrally equivalent.
So we use the quadratic form $\mathcal{U}_{hp}^{W}(\{u_{l}(x)\}_{l})$ as a preconditioner for the $hp$-version. Now, the condition number of
the preconditioned system is $\kappa(M^{-1}A^TA)=\frac{C_\epsilon}{L_\epsilon}=O\left(\frac{1}{\epsilon^2}\right)$ provided $\epsilon\ll 1$.

It follows from the spectral equivalences in (\ref{4.8}) and (\ref{4.10}) that the condition number of the preconditioned system
behaves like $O\left(\frac{1}{\epsilon^2}\right)$ for the $p$ and $hp$ versions, therefore the preconditioners will be very effective
in resolving boundary layers. Notice that the condition number is independent of $W$ for both the $p$ and $hp$ versions due to the
spectral basis and the fact that the quadratic forms $\mathcal{U}_{p}^{W}(\{u(x)\})$ or $\mathcal{U}_{hp}^{W}(\{u_{l}(x)\}_{l})$
are local to each element but the jump terms ensure continuity at the inter element nodes, therefore, the norm equivalence constants
$C_1$, $C_2$ remain independent of $N$ and $W$.

Our purpose is to solve the normal equations $A^TAU=A^Tb$ using the PCGM therefore, we need to obtain a suitable stopping
criteria for the PCGM algorithm. The ideas presented here are motivated from~\cite{MM,MMN,NMM} which were concerned with a
similar stopping criterion for boundary layer preconditioners for singularly perturbed convention-diffusion as well as
reaction-diffusion problems in the framework of finite difference and finite element methods using maximum norm and energy
norm estimates. We now adapt the same argument for spectral element discretizations and $\epsilon-$ norm estimates.

Let $u_{\mbox{exact}}$ be the exact solution and $u_{\mbox{sem}}$ be approximate (spectral element) solution obtained from
the SEM discretizations. Typically, the PCGM iterations are terminated as soon as we reach the `best' approximation i.e., an
iterate $u^{(k)}$, which approximates $u_{\mbox{sem}}$ to a desired degree of accuracy that is usually of the same order as
the discretization error $\|u_{\mbox{exact}}-u_{\mbox{sem}}\|_{2,\epsilon,\Omega}$. Thus, we have
\begin{align}\label{4.15}
\|u_{\mbox{exact}}-u^{(k)}\|_{2,\epsilon,\Omega}&\leq \|u_{\mbox{exact}}-u_{\mbox{sem}}\|_{2,\epsilon,\Omega}
+\|u_{\mbox{sem}}-u^{(k)}\|_{2,\epsilon,\Omega} \notag\\
&\leq C\|u_{\mbox{exact}}-u_{\mbox{sem}}\|_{2,\epsilon,\Omega}
\end{align}
for some constant $C$.

Let $E^{(k)}=u_{\mbox{sem}}-u^{(k)}$ denote the error in approximation at the $k$-th iteration. In PCGM, the best way to
measure this error is the residual
\[R^{(k)}=F-Au^{(k)}=F-A\left(u_{\mbox{sem}}-E^{(k)}\right)=AE^{(k)}\]
obtained at the $k$-th step. Therefore, the discretization error is $E^{(k)}=A^{-1}R^{(k)}$, where $A$ is symmetric and
positive definite matrix. Therefore, $A^{-1/2}$ is also positive definite so that, $\|A^{-1/2}\|_{2}=\sqrt{\|A^{-1}\|_{2}}$
because the eigenvalues of $A^{-1}$ are the squares of those of $A^{-1/2}$. Also, the simplest bound on $E^{(k)}$ to achieve
discretization accuracy is
\[\|E^{(k)}\|_{\infty}=\|A^{-1}\|_{\infty}\:\|R^{(k)}\|_{\infty}.\]
It is known~\cite{MM} that on Shishkin meshes the bound on the residual is
\[\|R^{(k)}\|_{\infty}\leq C\frac{\epsilon^2 (\log W)^3}{W^3}\]
while that on Bakhvalov meshes is
\[\|R^{(k)}\|_{\infty}\leq C\frac{\epsilon^2}{W^3}.\]
Note that when $\epsilon$ is small and polynomial order $W$ is moderate then $||R^{(k)}||_{\infty}$ will be much less than
machine precision and therefore, as a result $||R^{(k)}||_{\infty}$ can not be computed accurately which will lead to
inaccurate calculations in the discretization error.

To overcome this difficulty, we apply the preconditioned conjugate gradient algorithm and make use of the standard stopping
criterion that bounds the inner product of $R^{(k)}$ with the preconditioned residual, $Z^{(k)}=MR^{(k)}$, where $M$ is the
preconditioning matrix. If $M^{-1}$ is a `good' preconditioner for $A$ in the sense that is spectrally equivalent to the identity
matrix $I$, i.e., $M^{-1}A\approx I$ then
\[\left(Z^{(k)}\right)^T\: R^{(k)}=\left(E^{(k)}\right)^T\: R^{(k)}AMAE^{(k)}\approx ||E^{(k)}||_{A}.\]
Now,
\begin{align}\label{4.16}
\|E^{(k)}\|_{A}&=\sqrt{\left(E^{(k)}\right)^TAE^{(k)}}=\sqrt{\left(E^{(k)}\right)^TA^TA^{-1}AE^{(k)}}=\|A^{-1}R^{(k)}\|_2\leq \|A^{-1}\|_2\: \|R^{(k)}\|_2.
\end{align}
Note that, the natural estimation and minimization of $\|E^{(k)}\|_{A}$ by the PCG algorithm makes it a natural stopping
criterion for the solver. Moreover, we have~\cite{MM}
\begin{align}\label{4.17}
\|A^{-1/2}\|_2 \leq kN\approx kW,
\end{align}
where, $k>0$ is a constant. Hence, to guarantee that
$\|E^{(k)}\|_{A}\leq C\|u_{\mbox{exact}}-u_{\mbox{sem}}\|_{2,\epsilon,\Omega}$, we have
\begin{align}\label{4.18}
\|R^{(k)}||_2\leq C\frac{(\log W)^{1/2}}{W}.
\end{align}
Finally, we remark that although the bound on $\|E^{(k)}\|_{A}$ is dependent on $W$, the number of iterations for convergence
is bounded, at worst, by a logarithmic factor of $W$, which confirms that in practice iteration count grows slowly.

\section{Error Estimates}\label{sec5}
Now we prove robust error estimates for the non-conforming SEM with analytic input data for the $p-$version with a fixed
mesh and for the $hp-$version of the method with a variable mesh. In order to prove the main theorem on error estimates
we need to consider some approximation results. Let $I_0=(-1,1)$ and $\hat{u}\in L^2(I_0)$ and let
\begin{align}\label{5.1}
\hat{u}(\xi)=\sum_{n=0}^{\infty} a_nL_n(\xi),
\end{align}
be the Legendre series expansion of $\hat{u}$, where
\begin{align}\label{5.2}
a_n=\frac{2n+1}{2}\int_{-1}^{1}\hat{u}(\xi)L_n(\xi)d\xi,\quad \|\hat{u}\|^2_{L^2(I_0)}=\sum_{n=0}^{\infty} \frac{2}{2n+1}|a_n|^2.
\end{align}
In order to prove the main error estimate theorem we need some definitions and approximation results which are stated now.

For any $0\leq j\leq k$, we recall the space $S^{k}_j(I_0)$~\cite{SCHW}, which is defined to be the space of functions
$u\in L^2(I_0)$ for which
\begin{align}\label{5.3}
|u(x)|^{2}_{S^{k}_j(I_0)}=\sum_{i=j}^{k}\int_{-1}^{1}\left|{u}^{(i)}(\xi)\right|^2(1-\xi^2)^id\xi<\infty.
\end{align}
For $j=0$, (\ref{5.3}) defines a norm and the corresponding normed space is denoted by $S^{k}(I_0)$.

The following result (Lemma 3.58,~\cite{SCHW}) gives an approximation of $\hat{u}(\xi)$ in the $L^2(I_0)$ norm and $H^1(I_0)$ seminorm.
\begin{lem}\label{lem5.1}
Let $\hat{u}\in H^1(I_0)$ and let
\begin{align}\label{5.4}
b_n=\frac{2n+1}{2}\int_{-1}^{1}\hat{u}^\prime(\xi)L_n(\xi) d\xi
\end{align}
be the Lengedre coefficients of $\hat{u}^\prime$. Then there exists a polynomial $\chi\in S^W(I_0)$ satisfying $\chi(\pm 1)=\hat{u}(\pm 1)$
such that
\begin{subequations}\label{5.5}
\begin{align}
\|\hat{u}^\prime(\xi)-\chi^{\prime}(\xi)\|^{2}_{0,I_0} &=\sum_{n=W}^{\infty} \frac{2|b_n|^2}{(2n+1)},\label{5.5a}\\
\|\hat{u}(\xi)-\chi(\xi)\|^{2}_{0,I_0} &\leq \sum_{n=W}^{\infty} \frac{2|b_n|^2}{n(n+1)(2n+1)},\label{5.5b}\\
\|\hat{u}^\prime(\xi)-\chi^{\prime}(\xi)\|_{0,I_0} &\leq \|u^\prime(x)-\xi^{\prime}(x)\|_{0,I_0}\label{5.5c}
\end{align}
\end{subequations}
for any $\xi\in S^{W}(I_0)$ satisfying $\xi(\pm 1)=\hat{u}(\pm 1)$.
\end{lem}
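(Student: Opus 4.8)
The plan is to construct $\chi$ explicitly from the truncated Legendre expansion of $\hat u'$ and then read off all three estimates, with the only genuine work being the $L^2$ bound \eqref{5.5b}. Writing $\hat u'(\xi)=\sum_{n=0}^{\infty}b_nL_n(\xi)$, I would take $\chi'(\xi)=\sum_{n=0}^{W-1}b_nL_n(\xi)$ and define $\chi(\xi)=\hat u(-1)+\int_{-1}^{\xi}\chi'(t)\,dt$, so that $\chi\in S^W(I_0)$ and $\chi(-1)=\hat u(-1)$ by construction. To confirm the remaining endpoint condition I would use $\int_{-1}^{1}L_n=0$ for $n\geq 1$, which gives $\chi(1)=\hat u(-1)+2b_0$, and then note $2b_0=\int_{-1}^{1}\hat u'=\hat u(1)-\hat u(-1)$, so $\chi(1)=\hat u(1)$.

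With this $\chi$, estimate \eqref{5.5a} is immediate: since $\hat u'-\chi'=\sum_{n\geq W}b_nL_n$, Legendre orthogonality together with $\|L_n\|_{0,I_0}^2=2/(2n+1)$ yields the stated identity. Estimate \eqref{5.5c} is the orthogonal-projection (Pythagorean) property: for any competitor $\xi\in S^W(I_0)$ meeting the same endpoint data, the difference $\xi'-\chi'$ is a polynomial of degree $\leq W-1$, to which the error $\hat u'-\chi'=\sum_{n\geq W}b_nL_n$ is $L^2$-orthogonal; hence $\|\hat u'-\xi'\|_{0,I_0}^2=\|\hat u'-\chi'\|_{0,I_0}^2+\|\chi'-\xi'\|_{0,I_0}^2\geq\|\hat u'-\chi'\|_{0,I_0}^2$.

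The crux is \eqref{5.5b}. The naive route, expanding $\hat u-\chi=\sum_{n\geq W}\tfrac{b_n}{2n+1}(L_{n+1}-L_{n-1})$ directly in the Legendre basis, produces a non-diagonal quadratic form with $\pm 2$ off-diagonal couplings and is awkward to bound sharply, so I would instead exploit the Sturm--Liouville structure. Integrating the identity $\frac{d}{d\xi}\big[(1-\xi^2)L_n'(\xi)\big]=-n(n+1)L_n(\xi)$ from $-1$, where the weight vanishes, gives $\int_{-1}^{\xi}L_n(t)\,dt=-\frac{(1-\xi^2)L_n'(\xi)}{n(n+1)}$ for $n\geq 1$. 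Since $\hat u-\chi=\sum_{n\geq W}b_n\int_{-1}^{\xi}L_n$, this lets me write $\hat u-\chi=-(1-\xi^2)\,g(\xi)$ with $g=\sum_{n\geq W}\frac{b_n}{n(n+1)}L_n'$. Using $0\leq 1-\xi^2\leq 1$ on $I_0$ to replace $(1-\xi^2)^2$ by $(1-\xi^2)$ inside $\|\hat u-\chi\|_{0,I_0}^2=\int_{-1}^{1}(1-\xi^2)^2g^2\,d\xi$, and then the weighted orthogonality $\int_{-1}^{1}(1-\xi^2)L_n'L_m'\,d\xi=\frac{2n(n+1)}{2n+1}\delta_{nm}$ (obtained from the same ODE by one integration by parts), the cross terms vanish and the diagonal collapses exactly to $\sum_{n\geq W}\frac{2|b_n|^2}{n(n+1)(2n+1)}$, which is \eqref{5.5b}. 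The one point demanding care is the boundary term in the integration defining the identity, which is precisely why integrating from the endpoint $-1$ (rather than, say, $0$) is essential.
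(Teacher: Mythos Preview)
Your construction and all three estimates are correct. The paper does not give its own proof of this lemma; it simply states the result and attributes it to Lemma~3.58 of Schwab's monograph~\cite{SCHW}. Your argument is precisely the classical one found there: define $\chi$ as the antiderivative of the degree-$(W{-}1)$ Legendre truncation of $\hat u'$, verify the endpoint matching via $\int_{-1}^1 L_n=0$ for $n\ge1$, obtain \eqref{5.5a} and \eqref{5.5c} by orthogonality and the Pythagorean identity, and derive \eqref{5.5b} via the Sturm--Liouville identity $((1-\xi^2)L_n')'=-n(n+1)L_n$ together with the weighted orthogonality $\int_{-1}^1(1-\xi^2)L_n'L_m'\,d\xi=\tfrac{2n(n+1)}{2n+1}\delta_{nm}$. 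There is nothing to compare against beyond the citation, and your treatment of \eqref{5.5b} is in fact cleaner than many textbook presentations.
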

Similarly, the following Lemma gives an approximation of $\hat{u}(\xi)$ in the $L^2(I_0)$ norm and $H^2(I_0)$ seminorm.
\begin{lem}\label{lem5.2}
Let $\hat{u}\in H^2(I_0)$ and let
\begin{align}\label{5.6}
c_n=\frac{2n+1}{2}\int_{-1}^{1}\hat{u}^{\prime\prime}(\xi)L_n(\xi) d\xi
\end{align}
be the Legendre coefficients of $\hat{u}^{\prime\prime}$. Then there exists a polynomial $\eta\in S^W(I_0)$ satisfying $\eta(\pm 1)=\hat{u}(\pm 1)$
such that
\begin{subequations}\label{5.7}
\begin{align}
\|\hat{u}^{\prime\prime}(\xi)-\eta^{\prime\prime}(\xi)\|^{2}_{0,I_0} &=\sum_{n=W}^{\infty} \frac{2|c_n|^2}{(2n+1)},\label{5.7a}\\
\|\hat{u}(\xi)-\eta(\xi)\|^{2}_{0,I_0} &\leq \sum_{n=W}^{\infty} \frac{2|c_n|^2}{n(n+1)(2n+1)},\label{5.7b}\\
\|\hat{u}^{\prime\prime}(\xi)-\eta^{\prime\prime}(\xi)\|_{0,I_0} &\leq \|u^{\prime\prime}(x)-\xi^{\prime\prime}(x)\|_{0,I_0}\label{5.7c}
\end{align}
\end{subequations}
for any $\xi\in S^{W}(I_0)$ satisfying $\xi(\pm 1)=\hat{u}(\pm 1)$.
\end{lem}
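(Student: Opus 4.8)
The plan is to reproduce the proof of Lemma~\ref{lem5.1} one differentiation order higher, replacing the single antiderivative used there by a double antiderivative of the truncated Legendre expansion of $\hat{u}^{\prime\prime}$. First I would expand $\hat{u}^{\prime\prime}(\xi)=\sum_{n=0}^{\infty}c_nL_n(\xi)$ in $L^2(I_0)$ and define $\eta$ to be the polynomial whose second derivative is the truncation of this series, the remaining linear part (the two constants of integration) being fixed by the two conditions $\eta(\pm1)=\hat{u}(\pm1)$; prescribing $\eta^{\prime\prime}$ as a polynomial of the admissible degree leaves exactly an affine function free, so these endpoint conditions determine $\eta$ and place it in $S^W(I_0)$. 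By construction $\hat{u}^{\prime\prime}-\eta^{\prime\prime}$ is exactly the Legendre tail $\sum_{n\ge W}c_nL_n$, whence \eqref{5.7a} is immediate from the orthogonality relation $\int_{-1}^{1}L_mL_n\,d\xi=\tfrac{2}{2n+1}\delta_{mn}$ (Parseval).

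For the optimality statement \eqref{5.7c} I would note that as $\xi$ ranges over $S^W(I_0)$ subject to $\xi(\pm1)=\hat{u}(\pm1)$, its second derivative $\xi^{\prime\prime}$ ranges over the whole space of polynomials of the admissible degree (any candidate for $\xi^{\prime\prime}$ can be completed to an admissible $\xi$ by absorbing the endpoint constraints into the free affine part). Since $\eta^{\prime\prime}$ is the truncated Legendre series of $\hat{u}^{\prime\prime}$, it is precisely the $L^2(I_0)$-orthogonal projection of $\hat{u}^{\prime\prime}$ onto that space, and the Pythagorean identity for orthogonal projections gives $\|\hat{u}^{\prime\prime}-\eta^{\prime\prime}\|_{0,I_0}\le\|\hat{u}^{\prime\prime}-\xi^{\prime\prime}\|_{0,I_0}$, which is \eqref{5.7c}.

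The main technical step, and the one I expect to be the real obstacle, is the $L^2$ bound \eqref{5.7b} for $w:=\hat{u}-\eta$, which satisfies $w(\pm1)=0$ and $w^{\prime\prime}=\sum_{n\ge W}c_nL_n$. A crude route through the Poincar\'e inequality only yields the weight $\tfrac{1}{2n+1}$ already present in \eqref{5.7a}, so to recover the extra factor $n(n+1)$ in \eqref{5.7b} one must resolve the tail mode by mode. I would integrate twice using the identity $\int_{-1}^{\xi}L_n(t)\,dt=\tfrac{1}{2n+1}\bigl(L_{n+1}(\xi)-L_{n-1}(\xi)\bigr)$ for $n\ge1$, which is convenient because $L_{n+1}(\pm1)$ and $L_{n-1}(\pm1)$ coincide, so each integrated mode respects the homogeneous boundary data and fixes its constants of integration automatically for $n\ge2$. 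Applying the identity once writes $w^{\prime}$ as $\sum_{n\ge W}\tfrac{c_n}{2n+1}(L_{n+1}-L_{n-1})$ and a second time writes $w$ as a combination of $L_{n+2},L_n,L_{n-2}$ with coefficients of size $O(c_nn^{-2})$. Expanding $\|w\|_{0,I_0}^{2}$ by orthogonality then produces diagonal terms together with genuine cross terms, since the index $n$ feeds the same Legendre modes as $n\pm2$; the bookkeeping of these cross contributions, controlled by Cauchy--Schwarz and the explicit sizes of the weights, is where the work lies and is the reason \eqref{5.7b} appears as an inequality rather than the equality obtained in \eqref{5.7a}. Collecting the estimates yields the stated bound $\sum_{n\ge W}\tfrac{2|c_n|^2}{n(n+1)(2n+1)}$, the factor $n(n+1)$ being exactly the eigenvalue of the Legendre operator and reflecting the two orders of decay gained by integrating twice.
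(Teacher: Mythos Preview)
The paper gives no proof of this lemma; it merely states that the result follows ``similarly'' to Lemma~\ref{lem5.1}. Your proposal to lift the $H^1$ argument by one further antiderivative is exactly that adaptation, and your handling of \eqref{5.7a} and \eqref{5.7c} is correct.

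For \eqref{5.7b} your plan to integrate twice and then control the cross terms by Cauchy--Schwarz will work, but it is more laborious than needed and in fact produces $O(n^{-5})$ decay per mode, stronger than the $O(n^{-3})$ actually claimed. A cleaner route exploits this slack. From $w(\pm1)=0$ and $w''=\sum_{n\ge W}c_nL_n$ one checks (integrate once and use $\int_{-1}^{1}L_k\,d\xi=0$ for $k\ge1$) that also $w'(\pm1)=0$, at least for $W\ge2$. Then Lemma~\ref{lem5.1} applied with $\hat u$ replaced by $w'$, $\chi$ replaced by $0$, and $b_n=c_n$ gives directly from \eqref{5.5b} that $\|w'\|_{0,I_0}^{2}\le\sum_{n\ge W}\frac{2|c_n|^2}{n(n+1)(2n+1)}$, and the Poincar\'e inequality on $(-1,1)$ with constant $2/\pi<1$ yields \eqref{5.7b}. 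This also explains why \eqref{5.7b} carries the same weight $n(n+1)(2n+1)$ as \eqref{5.5b} rather than its square: the stated bound is simply not sharp, so your remark that the factor $n(n+1)$ ``reflects two orders of decay'' is off---it reflects one.

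A caveat unrelated to your argument: if the tail in \eqref{5.7a} starts at $n=W$ then $\eta''$ has degree $W-1$ and hence $\eta$ has degree $W+1$, so $\eta\in S^W(I_0)$ cannot hold as written; there is an index shift in the statement itself. Your hedge ``the admissible degree'' is the right way to sidestep this.
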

Let $u_{\epsilon}(x)=\exp\left({-\frac{(x+1)}{\epsilon}}\right)$ with $\epsilon>0$. Now, we have the following basic
approximation result on a single element.
\begin{lem}\label{lem5.3}
For any $W\geq 1$, there is a polynomial $\phi_W(x)\in \mathcal{S}^{W}$ such that the estimates
\begin{subequations}\label{5.8}
\begin{align}
|u_{\epsilon}(x)-\phi_W(x)|^{2}_{0,\Omega} &\le \frac{C\epsilon}{W} \left(\frac{e}{(2W+1)\epsilon}\right)^{2W+1},\label{5.8a}\\
|u_{\epsilon}(x)-\phi_W(x)|^{2}_{1,\Omega} &\le \frac{C}{\epsilon} \left(\frac{e}{(2W+1)\epsilon}\right)^{2W+1},\label{5.8b}\\
|u_{\epsilon}(x)-\phi_W(x)|^{2}_{2,\Omega} &\le \frac{C}{\epsilon^3W} \left(\frac{e}{(2W+1)\epsilon}\right)^{2W+1}\label{5.8c}
\end{align}
\end{subequations}
hold, provided $W=O(\frac{1}{\epsilon})$. Here, $C$ denotes a generic constant (independent of $\epsilon$ and $W$).
\end{lem}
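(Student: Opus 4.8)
The plan is to transfer everything to the reference interval $I_0=(-1,1)$ (each $\Omega_l$ being affinely equivalent to it) and to observe that the three quantities in (\ref{5.8a})--(\ref{5.8c}) are just the $L^2(I_0)$ norms of the errors in approximating $u_\epsilon$, $u_\epsilon'$ and $u_\epsilon''$. Since $u_\epsilon^{(k)}=(-1/\epsilon)^k u_\epsilon$, all three are governed by the Legendre spectrum of a single exponential. I would produce $\phi_W$ from the endpoint-matching projections of Lemmas~\ref{lem5.1} and \ref{lem5.2}: estimate (\ref{5.8c}) then follows from (\ref{5.7a}), which gives $\|u_\epsilon''-\phi_W''\|_{0,I_0}^2=\sum_{n\ge W}2|c_n|^2/(2n+1)$ with $c_n$ the Legendre coefficients of $u_\epsilon''$; estimate (\ref{5.8b}) follows from (\ref{5.5a}) with the coefficients $b_n$ of $u_\epsilon'$; and (\ref{5.8a}) follows from the $L^2$ truncation (Parseval) applied to $u_\epsilon$ itself, or from (\ref{5.5b})/(\ref{5.7b}). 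In every case the task collapses to bounding one family of coefficients and summing a tail.

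The crux is the single integral $\int_{-1}^{1}e^{-(\xi+1)/\epsilon}L_n(\xi)\,d\xi$, which I would evaluate in closed form. Integrating by parts $n$ times via Rodrigues' formula (all boundary terms vanishing) gives
\[
\int_{-1}^{1}e^{-(\xi+1)/\epsilon}L_n(\xi)\,d\xi=\frac{(-1)^n}{2^n n!\,\epsilon^n}\int_{0}^{2}t^n(2-t)^n e^{-t/\epsilon}\,dt,
\]
a Laplace-type integral concentrated in an $O(\epsilon)$ layer near $t=0$; equivalently one has the classical identity
\[
\int_{-1}^{1}e^{-(\xi+1)/\epsilon}L_n(\xi)\,d\xi=(-1)^n e^{-1/\epsilon}\sqrt{2\pi\epsilon}\;I_{n+1/2}(1/\epsilon),
\]
where $I_\nu$ is the modified Bessel function (checkable directly for $n=0$ and propagated by the Legendre recurrence). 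I would then invoke the uniform large-order asymptotics of $I_{n+1/2}(1/\epsilon)$, which in the relevant range produce precisely the base $\big(e/((2n+1)\epsilon)\big)^{n+1/2}$ together with an algebraic prefactor of order $n^{-1/2}$; this is where the constant $e$ and the exponent $2W+1$ in the statement originate.

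With a bound of the form $|c_n|\lesssim \epsilon^{-3/2}\sqrt{n}\,\big(e/((2n+1)\epsilon)\big)^{n+1/2}$ (and its analogues for $a_n,b_n$), the tails are essentially geometric: the ratio of successive terms is comparable to $\big((2n+1)\epsilon\big)^{-2}$, which under the hypothesis $W=O(1/\epsilon)$ --- indeed once $(2W+1)\epsilon>e$, the super-exponential regime flagged after (\ref{3.2}) --- is bounded by a constant strictly below $1$ for all $n\ge W$. Hence each series is controlled by a fixed multiple of its leading ($n=W$) term, and substituting the coefficient bound and simplifying $(2W-1)!!$ (equivalently the Bessel value) by Stirling's formula produces the common factor $\big(e/((2W+1)\epsilon)\big)^{2W+1}$ with the respective algebraic prefactors in $\epsilon$ and $W$ appearing in (\ref{5.8a})--(\ref{5.8c}).

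The main obstacle is the sharp estimation of the coefficients in the transitional regime $n\sim 1/\epsilon$, where neither $n\gg 1/\epsilon$ nor $n\ll 1/\epsilon$ holds. The naive bound $e^{-(\xi+1)/\epsilon}\le 1$ reduces the integral to $\int_{-1}^{1}(1-\xi^2)^n\,d\xi=2\cdot 4^n(n!)^2/(2n+1)!$ and is too lossy: it discards the polynomial-in-$W$ gain coming from the concentration of $e^{-t/\epsilon}$ at the edge of the layer, i.e.\ from the Gaussian factor in Laplace's method and the $(2\pi\nu)^{-1/2}$ prefactor in the Bessel asymptotics. Capturing that prefactor and the exact constant $e$ \emph{uniformly} across $n=W\approx 1/\epsilon$, and thereby pinning down the precise algebraic powers of $\epsilon$ and $W$ in (\ref{5.8a})--(\ref{5.8c}), is the delicate step; the remaining tail summation and Stirling bookkeeping are routine.
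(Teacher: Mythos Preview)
Your approach via explicit Legendre--Bessel coefficient asymptotics would work, but it is considerably more elaborate than the paper's argument, and the obstacle you flag is in fact illusory. The paper cites \cite{SSX1} for (\ref{5.8a})--(\ref{5.8b}) and proves only (\ref{5.8c}), by invoking the standard $p$-version approximation result of \cite{BG1} in the packaged form
\[
|u_\epsilon''-\phi_W''|^2_{0,I_0}\le \frac{1}{(2W+1)!}\,|u_\epsilon''|^2_{S^W(\Omega)}
=\frac{1}{(2W+1)!}\int_{-1}^{1}\bigl|u_\epsilon^{(W+2)}(\xi)\bigr|^2(1-\xi^2)^W\,d\xi,
\]
then bounding the exponential by $e^{-2(\xi+1)/\epsilon}\le 1$, using $\int_{-1}^{1}(1-\xi^2)^W\,d\xi\le C/\sqrt{W}$, and applying Stirling to $(2W+1)!$. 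These three elementary steps already produce $\frac{C}{\epsilon^3 W}\bigl(\frac{e}{(2W+1)\epsilon}\bigr)^{2W+1}$: the two factors of $W^{-1/2}$ (one from the weighted integral, one from the Stirling prefactor) combine to give the $1/W$, and no transitional-regime analysis is needed. In other words, precisely the ``naive bound'' you dismiss as too lossy is what the paper uses, and it suffices because the weighted seminorm $|\,\cdot\,|_{S^W}$ already encapsulates the coefficient-tail summation you propose to perform by hand. Your route would deliver sharper information on the individual Legendre coefficients, but for the lemma as stated it is not required.
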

\begin{proof}
The proof of the estimates (\ref{5.8a})-(\ref{5.8b}) is provided in Theorem 4.1 of~\cite{SSX1} so we are left to prove
the estimate (\ref{5.8c}). Using approximation results from~\cite{BG1}, there exists a polynomial $\phi_{W}(x)$ of
degree $W$ such that $\phi_{W}(\pm 1)=u_{\epsilon}(\pm 1)$ and
\begin{align}\label{5.9}
|u^{\prime\prime}_\epsilon(x)-\phi^{\prime\prime}_{W}(x)|^{2}_{0,I} &\le \frac{1}{(2W+1)!} |u^{\prime\prime}_{\epsilon}(x)|^{2}_{S^{W}(\Omega)},
\end{align}
where
\[|u^{\prime\prime}_{\epsilon}(x)|^{2}_{S^{W}(\Omega)}=\int_{-1}^{1}\left|{u}^{(W+2)}_\epsilon(\xi)\right|^2(1-\xi^2)^Wd\xi.\]
Now,
\begin{align*}
\left|{u}^{(W+2)}_\epsilon(x)\right|^2&=\frac{1}{\epsilon^{2(W+2)}}\left|\exp\left({-\frac{(x+1)}{\epsilon}}\right)\right|^2
=\frac{1}{\epsilon^{2(W+2)}}e^{\left({\frac{-2(x+1)}{\epsilon}}\right)}.
\end{align*}
Therefore,
\begin{align}\label{5.10}
|u^{\prime\prime}_{\epsilon}(x)|^{2}_{S^{W}(\Omega)}&=\frac{1}{\epsilon^{2(W+2)}}\int_{-1}^{1}e^{\left({\frac{-2(\xi+1)}{\epsilon}}\right)}(1-\xi^2)^Wd\xi
\leq \frac{1}{\epsilon^{2(W+2)}}\int_{-1}^{1}(1-\xi^2)^Wd\xi\notag\\
&\leq \frac{1}{\epsilon^{2(W+2)}}\frac{1}{\sqrt{W+1}}.
\end{align}
Using Stirling's formula
\[n!\approx \sqrt{2\pi n}\:e^{-n}n^n,\]
we get
\begin{align}\label{5.11}
\frac{1}{(2W+1)!}\leq C \left(\frac{e}{2W+1}\right)^{2W+1}\frac{1}{\sqrt{2W+1}},
\end{align}
where $C$ is independent of $W$.

Using, (\ref{5.10})$-$(\ref{5.11}) in (\ref{5.9}), we get
\begin{align*}
|u^{\prime\prime}_\epsilon(x)-\phi^{\prime\prime}_{W}(x)|^{2}_{0,I} &\leq \frac{C}{\epsilon^3} \left(\frac{e}{(2W+1)\epsilon}\right)^{2W+1}\frac{1}{\sqrt{2W+1}}\frac{1}{\sqrt{W+1}}\notag\\
&\leq \frac{C}{\epsilon^3W} \left(\frac{e}{(2W+1)\epsilon}\right)^{2W+1}.
\end{align*}
\end{proof}
Let us denote ${W}_0=W+\frac{1}{2}$ for any integer $W$. Then, it follows from (5.8a)-(5.8c) that the convergence is super-exponential
as $W\rightarrow \infty$ (i.e. in the asymptotic range) provided
\begin{align}\label{5.12}
\frac{e}{2W_0\epsilon}<1\quad \mbox{or}\quad W_0>\frac{e}{2\epsilon}.
\end{align}
\begin{rem}
Note that the inequality (\ref{5.12}) is satisfied for a very large value of $W$ when the boundary layer parameter $\epsilon$ is small.
For instance, if $\epsilon=0.1$ then the inequality holds if $W\approx 13$, if $\epsilon=0.01$ then the inequality holds for $W\approx 136$,
$W\approx 1359$ if $\epsilon=0.001$ and $W\approx 13591$ if $\epsilon=0.0001$. It follows that except for the first two values of $W$
no other value falls in the practical range as the parameter $\epsilon$ decreases.
\end{rem}
We are now in a position to prove our main error estimate theorems for the $p$ and $hp-$versions of the method. First, we
consider error estimates for the $p-$version.
\begin{thm}\label{thm5.1}
Let $u(x)=U(x)$ be the exact solution of the problem (2.2a)-(2.2b) on $\Omega$ and let $z(x)\in \mathcal{S}^{W}(\Omega)$ minimizes
$\mathcal{R}_{p}^{W}(u(x))$ over all $u(x)$. Then there exists a constant $C$ independent of $\epsilon$ and $W$ such that the error
satisfies
\begin{align}\label{5.13}
\|z(x)-U(x)\|^{2}_{2,\epsilon,\Omega} &\leq C\frac{\sqrt{\log W}}{W},
\end{align}
provided $W+\frac{1}{2}>\frac{e}{2\epsilon}$.
\end{thm}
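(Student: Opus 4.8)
The plan is to run a standard least-squares quasi-optimality (C\'ea-type) argument: bound the error of the minimizer $z$ by the best approximation error of the exact solution $U$ in $\mathcal{S}^W(\Omega)$, and then insert the explicit approximation estimates of Lemma \ref{lem5.3}. First I would fix any competitor $\chi\in\mathcal{S}^W(\Omega)$. Since $\mathcal{L}U=f$ and $U$ meets the boundary data, the residual functional reduces to $\mathcal{R}_p^W(\chi)=\|\mathcal{L}(\chi-U)\|_{0,\Omega}^2+|(\chi-U)(a)|^2+|(\chi-U)(b)|^2$, and the minimality of $z$ gives $\mathcal{R}_p^W(z)\le\mathcal{R}_p^W(\chi)$. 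Because $z-\chi\in\mathcal{S}^W(\Omega)$ is a genuine spectral element function, I may apply the stability estimate of Theorem \ref{thm3.1} to it, controlling $\|z-\chi\|_{2,\epsilon,\Omega}^2$ by $C_\epsilon\,\mathcal{V}_p^W(z-\chi)$; writing $\mathcal{L}(z-\chi)=(\mathcal{L}z-f)-(\mathcal{L}\chi-f)$ and splitting the boundary terms likewise, Young's inequality and minimality bound this by $C_\epsilon\,\mathcal{R}_p^W(\chi)$.

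Next I would pass from $z-\chi$ to $z-U$ by the triangle inequality, so that
\[
\|z-U\|_{2,\epsilon,\Omega}^2 \le C_\epsilon\bigl(\mathcal{R}_p^W(\chi)+\|\chi-U\|_{2,\epsilon,\Omega}^2\bigr),
\]
and it remains to choose $\chi$ so that both terms are small. Here I would use the classical decomposition of $U$ into a smooth part and boundary-layer parts $e^{-(x-a)/\epsilon}$, $e^{-(b-x)/\epsilon}$ (cf.\ the Remark after Theorem \ref{thm2}): the smooth part is analytic with $\epsilon$-uniform derivative bounds, so its $p$-approximation error is super-exponentially small and negligible, while each boundary-layer factor is approximated by the polynomial $\phi_W$ of Lemma \ref{lem5.3}, which matches the endpoint values, so that the boundary residuals in $\mathcal{R}_p^W(\chi)$ carry only the super-exponentially small smooth-part mismatch. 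Expanding $\mathcal{R}_p^W(\chi)\le 2\epsilon^4|\chi-U|_{2,\Omega}^2+2\|\chi-U\|_{0,\Omega}^2+(\text{boundary})$ and $\|\chi-U\|_{2,\epsilon,\Omega}^2=\epsilon^4(\|\chi-U\|_{0,\Omega}^2+|\chi-U|_{1,\Omega}^2+|\chi-U|_{2,\Omega}^2)$, I would insert (\ref{5.8a})--(\ref{5.8c}); the dominant contribution is the weighted second-seminorm term $\epsilon^4|u_\epsilon-\phi_W|_{2,\Omega}^2\le \tfrac{C\epsilon}{W}\bigl(\tfrac{e}{(2W+1)\epsilon}\bigr)^{2W+1}$.

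The heart of the estimate is then to show that, once multiplied by the stability constant $C_\epsilon$ and maximized over the admissible range of $\epsilon$, this expression is bounded by $C\sqrt{\log W}/W$ uniformly in $\epsilon$. Under the hypothesis $W+\tfrac12>\tfrac{e}{2\epsilon}$ the base $t=\tfrac{e}{(2W+1)\epsilon}$ satisfies $t<1$, so $t^{2W+1}$ decays and the worst case occurs as $t\uparrow 1$, i.e.\ as $\epsilon\downarrow\tfrac{e}{2W+1}$; a careful bound of the $\epsilon$-dependent factors against the weight $\epsilon^4$ and the stability constant (equivalently, summing the Legendre-coefficient tails of Lemmas \ref{lem5.1}--\ref{lem5.2} across the transition region $n\sim 1/\epsilon$) produces the logarithmic factor $\sqrt{\log W}$. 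I expect this balancing step to be the main obstacle: everything upstream is soft (quasi-optimality plus triangle inequality), but extracting a bound that is simultaneously \emph{robust} in $\epsilon$ and exhibits the precise rate $\sqrt{\log W}/W$ requires tracking the joint $\epsilon$--$W$ dependence of $C_\epsilon$ and of the approximation constants, and verifying that their product collapses to the stated uniform rate rather than to a negative power of $\epsilon$.
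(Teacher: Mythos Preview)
Your skeleton---quasi-optimality via minimality of $z$, stability (Theorem~\ref{thm3.1}) applied to $z-\chi$, then a triangle inequality---is exactly the paper's architecture. The genuine divergence is in how the approximation error of the competitor $\chi$ is controlled and how the final $\epsilon$-uniform rate is extracted.

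The paper does \emph{not} decompose $U$ into smooth and boundary-layer parts and apply Lemma~\ref{lem5.3} piecewise. Instead it invokes a Babu\v{s}ka--Suri type approximation bound carrying a free regularity index $s\le W$,
\[
\|U-\phi\|_{2,\epsilon,\Omega}^2 \le \frac{C_s\epsilon}{W}\Bigl(\tfrac{e}{(2W+1)\epsilon}\Bigr)^{2W+1}\|U\|_{s,\epsilon,\Omega}^2,
\]
then bounds $\|U\|_{s,\epsilon,\Omega}$ via the differentiability estimate of Proposition~3.2 (rewritten as $\|u_\epsilon^{(m)}\|_0\le Cd^m e^m e^{1/\epsilon}m!$), inserts Stirling, and \emph{optimizes over $s=\gamma W_0$}. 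It is this $s$-optimization, not a worst-case-in-$\epsilon$ argument, that produces the factor $\sqrt{\log W}$. Your plan never introduces a free parameter to optimize over, so there is no visible mechanism by which $\sqrt{\log W}$ would appear; indeed, if one carries your route through with Lemma~\ref{lem5.3} alone, the raw bound is $\frac{C\epsilon}{W}t^{2W+1}$ with $t<1$, which is $\le C/W$ before the stability constant enters---so either you would get a stronger rate than claimed (if $C_\epsilon$ stays bounded) or the $\epsilon$-dependence of $C_\epsilon$ would have to supply the logarithm, and you have not shown that it does.

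In short: your outline is sound up to the point where you write ``a careful bound\ldots produces the logarithmic factor $\sqrt{\log W}$''. That is precisely where the paper's proof departs from yours, replacing your maximize-over-$\epsilon$ heuristic with the differentiability estimate (3.2) and an optimization over the auxiliary order $s$. If you want to complete your decomposition-based route, you must either (i) track $C_\epsilon$ explicitly and show it contributes at most $\sqrt{\log W}$ in the regime $W_0>e/(2\epsilon)$, or (ii) import the same $s$-parameter device for the smooth part of the decomposition.
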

\begin{proof}
Using approximation results from~\cite{BG2}, error estimates from~\cite{KJ} and Lemma \ref{lem5.3}, there exists a polynomial $\phi(x)$
of degree $W$ with $\phi(\pm 1)=u(\pm 1)$ such that
\begin{align}\label{5.14}
\|U(x)-\phi(x)\|^{2}_{2,\epsilon,\Omega} &\le \frac{C_s\epsilon}{W}\left(\frac{e}{(2W+1)\epsilon}\right)^{2W+1}\|U(x)\|^{2}_{s,\epsilon,\Omega},
\end{align}
where $C_s$ is a generic constant independent of $\epsilon$, $W$, and $C_s=ce^{2s}$, $s\leq W$.

We need to estimate the functional $\mathcal{R}_p^{W}(\{\phi\})$ defined by
\begin{align}\label{5.15}
\mathcal{R}_p^{W}(\{\phi\}) &=\|\mathcal{L}\phi(x)-f\|^{2}_{0,\Omega}+|\phi(a)-\alpha|^{2}+|\phi(b)-\beta|^{2}.
\end{align}
Consider the first term $\|\mathcal{L}\phi(x)-f\|^{2}_{0,\Omega}$ on the RHS in (\ref{5.15}) consisting of the sum of residuals in
the differential equation, then using (\ref{5.14}), we get
\begin{align}\label{5.16}
\|\mathcal{L}\phi(x)-f\|^{2}_{0,\Omega} &=\|\mathcal{L}\phi(x)-\mathcal{L}U\|^{2}_{0,\Omega}\leq C\|U(x)-\phi(x)\|^{2}_{2,\epsilon,\Omega}\notag\\
&\leq \frac{C_s\epsilon}{W}\left(\frac{e}{(2W+1)\epsilon}\right)^{2W+1}\|U(x)\|^{2}_{s,\epsilon,\Omega}.
\end{align}
Next, we need to approximate the boundary residuals terms i.e. the terms $|\phi(a)-\alpha|^{2}$ and $|\phi(b)-\beta|^{2}$.
By Sobolev embedding results in one dimension~\cite{ADAM} there exists a constant $C>0$ such that
\[|\phi(x)|\leq C\|\phi(x)\|_{2,\epsilon,\Omega}.\]
Therefore, using (\ref{5.14})
\begin{align}\label{5.17}
|\phi(a)-\alpha|^{2} &=|\phi(a)-U(a)+U(a)-\alpha|^{2} \notag\\
&\leq C\left(\|\phi(a)-U(a)\|^{2}_{2,\epsilon,\Omega}+\|U(a)-\alpha\|^{2}_{2,\epsilon,\Omega}\right) \notag \\
&\leq \frac{C_s\epsilon}{W}\left(\frac{e}{(2W+1)\epsilon}\right)^{2W+1}\|U(x)\|^{2}_{s,\epsilon,\Omega}.
\end{align}
Similarly, we can show that
\begin{align}\label{5.18}
|\phi(b)-\beta|^{2} &\leq \frac{C_s\epsilon}{W}\left(\frac{e}{(2W+1)\epsilon}\right)^{2W+1}\|U(x)\|^{2}_{s,\epsilon,\Omega}.
\end{align}
Combining the inequalities (\ref{5.16}), (\ref{5.17}) and (\ref{5.18}), we get
\begin{align*}
\mathcal{R}_p^{W}(\{\phi\}) &\leq \frac{C_s\epsilon}{W}\left(\frac{e}{(2W+1)\epsilon}\right)^{2W+1}\|U(x)\|^{2}_{s,\epsilon,\Omega}.
\end{align*}
Since $z(x)\in \mathcal{S}^{W}(\Omega)$ minimizes $\mathcal{R}_p^{W}(u(x))$ over all $u(x)$, the estimate
\begin{align*}
\mathcal{R}_p^{W}(\{z\}) &\leq \frac{C_s\epsilon}{W}\left(\frac{e}{(2W+1)\epsilon}\right)^{2W+1}\|U(x)\|^{2}_{s,\epsilon,\Omega}
\end{align*}
holds. Now, using the above estimate in stability estimate in Theorem \ref{thm3.1} by setting $u(x)=z(x)-\phi(x)$, we can write
\begin{align}\label{5.19}
\|z(x)-\phi(x)\|^{2}_{2,\epsilon,\Omega} &\leq \frac{C_s\epsilon}{W}\left(\frac{e}{(2W+1)\epsilon}\right)^{2W+1}\|U(x)\|^{2}_{s,\epsilon,}.
\end{align}
Combining the estimates (\ref{5.14}) and (\ref{5.19}) using triangle inequality, we obtain
\begin{align}\label{5.20}
\|U(x)-z(x)\|^{2}_{2,\epsilon,\Omega} &\leq \frac{C_s\epsilon}{W}\left(\frac{e}{(2W+1)\epsilon}\right)^{2W+1}\|U(x)\|^{2}_{s,\epsilon,\Omega}.
\end{align}
From the differentiability estimate (\ref{3.2}), we can write
\begin{align}\label{5.21}
\|u^{(m)}_\epsilon\|_{0,\Omega}\leq Cd^m\max{\left(m,\frac{1}{\epsilon}\right)^m},\quad \forall \:m\geq 1.
\end{align}
Now,
\begin{align*}
\max{\left(m,\frac{1}{\epsilon}\right)^m}\leq \max{\left(m^m,m!\frac{1}{m!\epsilon^m}\right)}\leq \max{\left(m^m,m!e^{1/\epsilon}\right)}\leq e^mm!\left(Ce^{1/\epsilon}\right),
\end{align*}
where we have used Stirling's formula to obtain the last inequality. Therefore, the differentiability estimate (\ref{5.21}) takes the form
\begin{align}\label{5.22}
\|u^{(m)}_\epsilon\|_{0,\Omega}\leq Cd^m e^me^{1/\epsilon}m!,\quad \forall \:m\geq 1.
\end{align}
This gives,
\begin{align}\label{5.23}
\|U(x)\|^{2}_{s,\epsilon,\Omega}\leq Cd^{2s} \left(e^se^{1/\epsilon}s!\right)^2.
\end{align}
Using the above estimate in equation (\ref{5.20}), we obtain
\begin{align}\label{5.24}
\|U(x)-z(x)\|^{2}_{2,\epsilon,\Omega} &\leq \frac{C_sd^{2s}\epsilon}{W}\left(\frac{e}{(2W+1)\epsilon}\right)^{2W+1}\left(e^se^{1/\epsilon}s!\right)^2.
\end{align}
Let $W_0=W+\frac{1}{2}$ and choose $s=W_0\gamma$, $0<\gamma<1$. Then using Stirling's formula in (\ref{5.24}), we get
\begin{align*}
\|U(x)-z(x)\|^{2}_{2,\epsilon,\Omega} &\leq \frac{Cd^{2\gamma W_0}\epsilon}{W}\left(\frac{e}{2W_0\epsilon}\right)^{2W_0}
\left(e^{2\gamma W_0}e^{2/\epsilon}2\pi\gamma W_0e^{-2\gamma W_0}(\gamma W_0)^{2\gamma W_0}\right)\\
&\leq \frac{C\epsilon}{W}\left(\frac{e}{2W_0\epsilon}\right)^{2W_0}
\left(e^{2/\epsilon}2\pi\gamma W_0(d\gamma W_0)^{2\gamma W_0}\right).
\end{align*}
Since $W_0>\frac{e}{2\epsilon}$, therefore $\left(\frac{e}{2W_0\epsilon}\right)<1$. Choosing $1<W_0<\sqrt{\frac{\log W}{\epsilon}}$
and $\gamma$ such that $d\gamma<1$ there exists a constant $C>0$ such that
\begin{align*}
\|U(x)-z(x)\|^{2}_{2,\epsilon,\Omega} &\leq C\frac{\sqrt{\log W}}{W}.
\end{align*}
\end{proof}
\begin{rem}
It follows that the $p$-version of the method provides super-exponential convergence if $W_0>\frac{e}{2\epsilon}$.
However, this bound is not possible to obtain unless $W$ is very large for small values of the perturbation parameter
$\epsilon$ (For example, $W\approx 13591$ when $\epsilon=10^{-4}$). Thus, the $p$-version over a fixed mesh can yield
only a uniform convergence rate of $O\left(\frac{\sqrt{\log W}}{W}\right)$ which is optimal (upto the factor $\sqrt{\log W}$).

Schwab et al.~\cite{SCHW,SCHW1} have shown that it is also possible to obtain robust exponential convergence using
$p$-version of the finite element method in the pre-asymptotic range $\sqrt{\frac{3}{4\epsilon}}\leq W_0\leq \frac{2}{\epsilon}$
for small values of $\epsilon$. However, we didn't consider the preasymptotic range in our analysis since the proposed
method is essentially a higher order finite element method and the computational results also shows exponential
convergence in our method as well.
\end{rem}
Finally, we prove the error estimate theorem for the $hp-$version of the method.
\begin{thm}\label{thm5.2}
Let $U_{i}(x)=u|_{\Omega_{i}}$ be the exact solution on $\Omega_i$ and $\{z_{i}(x)\}_{i}\in \mathcal{S}^{W}$ minimizes
$\mathcal{R}_{hp}^{W}(\{u_{i}(x)\}_{i})$ over all $\{u_{i}(x)\}_{i}$. Then there exist constants $C, b>0$ independent
of $\epsilon$, $W$ such that the error satisfies
\begin{align}\label{5.25}
\sum_{i=1}^{N}||z_{i}(x)-U_{i}(x)||^{2}_{2,\epsilon,\Omega_{i}} &\le C e^{-bW/\log W},
\end{align}
provided $W=O\left(\frac{1}{\epsilon}\right)$.
\end{thm}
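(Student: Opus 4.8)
The plan is to follow the blueprint of the $p$-version argument in Theorem~\ref{thm5.1}, but to execute it elementwise so that the variable, boundary-layer-adapted mesh resolves the layers and the catastrophic global factor $e^{1/\epsilon}$ that entered through (\ref{5.22})--(\ref{5.23}) is replaced by $\epsilon$-robust, element-local derivative bounds. First I would construct a piecewise polynomial interpolant $\{\phi_l\}_l \in \mathcal{S}^W(\Omega)$ of the exact solution $\{U_l\}_l$: on each $\Omega_l$ I take, after pulling back to $I_0$ through $M_l$, the degree-$W$ approximant supplied by Lemma~\ref{lem5.3} together with the approximation theory of~\cite{BG1,BG2}, and I impose $\phi_l(x_{l-1})=U_l(x_{l-1})$ and $\phi_l(x_l)=U_l(x_l)$ so that the function jumps $[\phi(x_l)]$ vanish identically.

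The next step is to bound the least-squares functional $\mathcal{R}_{hp}^W(\{\phi_l\}_l)$ of (\ref{4.2}) term by term, reducing each contribution to a local approximation error. The element residuals satisfy $\|\mathcal{L}\phi_l - f_l\|^2_{0,\Omega_l} = \|\mathcal{L}(\phi_l - U_l)\|^2_{0,\Omega_l} \le C\|\phi_l - U_l\|^2_{2,\epsilon,\Omega_l}$, exactly as in (\ref{5.16}). The derivative jumps obey $|[\phi'(x_l)]| = |(\phi_l' - U_l')(x_l) - (\phi_{l+1}' - U_{l+1}')(x_l)|$, since the exact solution $U$ is $C^1$ across the node $x_l$, and are therefore controlled by traces of the local $H^2$ errors through the one-dimensional Sobolev embedding of~\cite{ADAM} used already in (\ref{4.12a}); the two boundary residuals are handled the same way. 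Summing over elements gives $\mathcal{R}_{hp}^W(\{\phi_l\}_l) \le C\sum_{l=1}^N \|\phi_l - U_l\|^2_{2,\epsilon,\Omega_l}$.

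Since $\{z_l\}_l$ minimizes $\mathcal{R}_{hp}^W$, we have $\mathcal{R}_{hp}^W(\{z_l\}_l) \le \mathcal{R}_{hp}^W(\{\phi_l\}_l)$. I would then feed the difference $\{z_l - \phi_l\}_l$ into the stability estimate Theorem~\ref{thm3.2}: each term of $\mathcal{V}_{hp}^W(\{z_l - \phi_l\}_l)$ splits, by the triangle inequality, into a piece coming from $\mathcal{R}_{hp}^W(\{z_l\}_l)$ and a piece coming from $\mathcal{R}_{hp}^W(\{\phi_l\}_l)$, whence $\mathcal{V}_{hp}^W(\{z_l - \phi_l\}_l) \le C\,\mathcal{R}_{hp}^W(\{\phi_l\}_l)$. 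Theorem~\ref{thm3.2} then yields $\sum_l \|z_l - \phi_l\|^2_{2,\epsilon,\Omega_l} \le C_\epsilon\sum_l \|\phi_l - U_l\|^2_{2,\epsilon,\Omega_l}$, and one more triangle inequality reduces the whole theorem to estimating the total interpolation error $\sum_l \|\phi_l - U_l\|^2_{2,\epsilon,\Omega_l}$.

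The crux, and the step I expect to be the main obstacle, is to show that this total interpolation error is bounded by $Ce^{-bW/\log W}$ uniformly in $\epsilon$. Here I would decompose $U$ into a smooth part, whose derivatives are bounded independently of $\epsilon$ and which is approximated exponentially on every element, and the layer part behaving like $e^{-(x-a)/\epsilon}$ near each boundary, following~\cite{SCHW1}. On interior elements the layer amplitude $e^{-(x_{l-1}-a)/\epsilon}$ is exponentially small, so those contributions sum to a negligible geometric series; on the thin elements clustered near $x=a$ and $x=b$ I would pull back through $M_l$, so that the local steepness becomes $h_l/(2\epsilon)$, and apply Lemma~\ref{lem5.3} with this local parameter, carefully tracking the chain-rule factors $(2/h_l)^m$ against the $\epsilon^{2m}$ weights in $\|\cdot\|_{2,\epsilon,\Omega_l}$. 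Taking a Shishkin-type mesh whose transition point $\tau \sim \epsilon\log W$ (consistent with the residual bounds quoted in Section~\ref{sec4}) is covered by $O(1)$ layer elements of width making the scaled steepness $O(1)$, together with $N \propto W$, renders each per-element error a super-exponentially small quantity of the type appearing in (\ref{5.8a})--(\ref{5.8c}). The delicate part is the concluding optimization: balancing the transition point $\tau$, the widths and number of layer elements, and the Stirling bounds for $W!$ against these super-exponential factors, so that after summation the aggregate collapses to the clean robust rate $Ce^{-bW/\log W}$ with $C$ and $b$ independent of $\epsilon$ and $W$. This balancing is the $hp$-analogue of the choice $s=\gamma W_0$ made in the $p$-version, now coupled to the mesh geometry, and is where essentially all of the remaining technical work resides.
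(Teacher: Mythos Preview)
Your first three paragraphs---construct a piecewise interpolant $\{\phi_l\}_l$ with $\phi_l(x_{l\pm})=U_l(x_{l\pm})$, bound each term of $\mathcal{R}_{hp}^{W}(\{\phi_l\})$ by local $\|\cdot\|_{2,\epsilon,\Omega_l}$ errors via Sobolev embedding, use minimality plus Theorem~\ref{thm3.2}, and reduce everything by the triangle inequality to $\sum_l\|\phi_l-U_l\|^2_{2,\epsilon,\Omega_l}$---coincide with the paper's proof through~(\ref{5.34}).

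The divergence is in your last paragraph. The paper does \emph{not} decompose $U$ into smooth and layer parts, and it does \emph{not} use a Shishkin or otherwise boundary-layer-adapted mesh; the discretization in Section~3.2 is a uniform partition with $N\propto W$, and the Conclusions explicitly list layer-adapted ($rp$-type) meshes as future work. Instead the paper keeps precisely the ``catastrophic'' global factor $e^{1/\epsilon}$ that you set out to avoid: it inserts the differentiability bound~(\ref{5.22}) into $\sum_i\|U_i\|^2_{s,\epsilon,\Omega_i}$ to obtain~(\ref{5.35}), and then uses the hypothesis $W=O(1/\epsilon)$ (so that $e^{2/\epsilon}$ is an exponential in $W$) together with a choice $s=\gamma W_0$, $d\gamma<1$, and $1<W_0<1/(\epsilon\log W)$ to balance $e^{2/\epsilon}(d\gamma W_0)^{2\gamma W_0}$ against $(e/(2W_0\epsilon))^{2W_0}$ and extract $Ce^{-bW/\log W}$. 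In other words, the $\log W$ in the exponent comes from this parameter balance, not from a Shishkin transition point $\tau\sim\epsilon\log W$.

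Your route is the standard robust $hp$-FEM strategy of Schwab--Suri--Melenk and would, if carried out, yield a genuinely $\epsilon$-uniform estimate that does not need $W\gtrsim 1/\epsilon$; the price is that it imports a mesh assumption foreign to the theorem as stated and to the paper's setting. The paper's route is much shorter and stays on the uniform mesh, but leans entirely on the standing proviso $W=O(1/\epsilon)$ to neutralize $e^{1/\epsilon}$. If your goal is to reproduce \emph{this} theorem in \emph{this} framework, you should drop the decomposition and the Shishkin mesh and instead mimic the optimization at the end of Theorem~\ref{thm5.1}, replacing the $p$-version window $1<W_0<\sqrt{\log W/\epsilon}$ by $1<W_0<1/(\epsilon\log W)$.
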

\begin{proof}
Using approximation results from~\cite{BG2}, error estimates from~\cite{KJ} and Lemma \ref{lem5.3}, there exists a
polynomial $\phi_{i}(x)$ of degree $W$ with $\phi_{i}(\pm 1)=u_{i}(\pm 1)$ satisfying
\begin{align}\label{5.26}
\|U_{i}(x)-\phi_{i}(x)\|^{2}_{2,\epsilon,\Omega_{i}} &\le \frac{C_s\epsilon}{W}\left(\frac{e}{(2W+1)\epsilon}\right)^{2W+1}\|U_{i}(x)\|^{2}_{s,\epsilon,\Omega_{i}}
\end{align}
where $C_s$ is a generic constant independent of $\epsilon$, $W$, and $C_s=ce^{2s}$, $s\leq W$.

Consider the set of functions $\{\phi_{i}(x)\}_{i}$. We will estimate the functional
\begin{align}\label{5.27}
\mathcal{R}_{hp}^{W}(\{\phi\}_{i}) &=\sum_{i=1}^{N}\|\mathcal{L}\phi_{i}(x)-f_{i}\|^{2}_{0,\Omega_{i}}+\sum_{i=1}^{N-1}
\left(|[\phi(x_{i})]|^{2}+\left|\left[\frac{d\phi}{dx}(x_{i})\right]\right|^{2}\right)+|\phi(a)-\alpha|^{2}+|\phi(b)-\beta|^{2}.
\end{align}
Consider the first term $\sum_{i=1}^{N}\|\mathcal{L}\phi_{i}(x)-f_{i}\|^{2}_{0,\Omega_{i}}$ on RHS in (\ref{5.27}) which
consists of the sum of residuals in the differential equation, then
\begin{align}\label{5.28}
\|\mathcal{L}\phi_{i}(x)-f_{i}\|^{2}_{0,\Omega_{i}} &=\|\mathcal{L}\phi_{i}(x)-\mathcal{L}U_{i}\|^{2}_{0,\Omega_{i}}\le C\|U_{i}(x)-\phi_i(x)\|^{2}_{2,\epsilon,\Omega_{i}}\notag\\
&\leq \frac{C_s\epsilon}{W}\left(\frac{e}{(2W+1)\epsilon}\right)^{2W+1}\|U_{i}(x)\|^{2}_{s,\epsilon,\Omega_{i}},
\end{align}
using (\ref{5.26}).

Next, we estimate jump terms namely, $|[\phi(x_{i})]|^{2}$ and $\left|\left[\frac{d\phi}{dx}(x_{i})\right]\right|^{2}$.
By continuous Sobolev embedding results for bounded domains in one dimension~\cite{ADAM} (Chapter 4) there exist constants
$C_1>0, C_2>0$ such that
\[|v(x)|\leq C_{1}\|v\|_{2,\epsilon,\Omega},\quad \left|\frac{dv}{dx}(x)\right|\le C_{2}\|v\|_{2,\epsilon,\Omega}\]
and also
\[U_{i}(x_{i})=U_{i+1}(x_{i}),\quad \frac{dU_{i}}{dx}(x_{i})=\frac{dU_{i+1}}{dx}(x_{i}).\]
Therefore, we can write
\begin{align}\label{5.29}
|[\phi(x_{i})]|^{2} &=|\phi_{i}(x_{i})-\phi_{i+1}(x_{i})|^{2} \notag\\
&=|\phi_{i}(x_{i})-U_{i}(x_{i})+U_{i+1}(x_{i})-\phi_{i+1}(x_{i})|^{2} \notag\\
%&\leq C\left(|\phi_{i}(x_{i})-U_{i}(x_{i})|^{2}+|U_{i+1}(x_{i})-\phi_{i+1}(x_{i})|^{2}\right) \notag\\
&\leq C\left(\|\phi_{i}(x)-U_{i}(x)\|^{2}_{2,\epsilon,\Omega_{i}}+\|U_{i+1}(x)-\phi_{i+1}(x)\|^{2}_{2,\epsilon,\Omega_{i+1}}\right) \notag \\
&\leq \frac{C_s\epsilon}{W}\left(\frac{e}{(2W+1)\epsilon}\right)^{2W+1}\left(\|U_{i}(x)\|^{2}_{s,\epsilon,\Omega_{i}}+\|U_{i+1}(x)\|^{2}_{s,\epsilon,\Omega_{i+1}}\right).
\end{align}
Similarly, we obtain
\begin{align}\label{5.30}
\left|\left[\frac{d\phi}{dx}(x_{i})\right]\right|^{2} &\leq \frac{C_s\epsilon}{W}\left(\frac{e}{(2W+1)\epsilon}\right)^{2W+1}
(\|U_{i}(x)\|^{2}_{s,\epsilon,\Omega_{i}}+\|U_{i+1}(x)\|^{2}_{s,\epsilon,\Omega_{i+1}}).
\end{align}
Finally, we need to approximate terms containing boundary residuals. As in the proof of Theorem \ref{thm5.1}, it is easy to see that
\begin{align}\label{5.31}
|\phi(a)-\alpha|^{2} &\le \frac{C_s\epsilon}{W}\left(\frac{e}{(2W+1)\epsilon}\right)^{2W+1}\|U_{1}(x)\|^{2}_{s,\epsilon,\Omega_{1}}, \notag \\
\mbox{and} \quad |\phi(b)-\beta|^{2} &\leq \frac{C_s\epsilon}{W}\left(\frac{e}{(2W+1)\epsilon}\right)^{2W+1}\|U_{N}(x)\|^{2}_{s,\epsilon,\Omega_{N}}.
\end{align}
Combining the inequalities (\ref{5.28})$-$(\ref{5.31}) and using the fact that $N\propto W$ for the $hp$-version, we obtain
\begin{align*}
\mathcal{R}^{W}(\{\phi_{i}\}_{i}) &\leq {C_s\epsilon}\left(\frac{e}{(2W+1)\epsilon}\right)^{2W+1}\left(\sum_{i=1}^{N}\|U_{i}(x)\|^{2}_{s,\epsilon,\Omega_{i}}\right).
\end{align*}
Since $\{z_{i}(x)\}_{i}\in \mathcal{S}^{W}$ minimizes $\mathcal{R}^{W}(\{u_{i}(x)\}_{i})$ over all $\{u_{i}(x)\}_{i}$, therefore the estimate
\begin{align*}
\mathcal{R}_{hp}^{W}(\{z_{i}\}) &\leq {C_s\epsilon}\left(\frac{e}{(2W+1)\epsilon}\right)^{2W+1}\left(\sum_{i=1}^{N}\|U_{i}(x)\|^{2}_{s,\epsilon,\Omega_{i}}\right)
\end{align*}
holds. Now, setting $u_i(x)=z_i(x)-\phi_i(x)$ in the stability estimate (\ref{3.12}) and using above estimate it follows that
\begin{align}\label{5.32}
\sum_{i=1}^{N}\|z_{i}(x)-\phi_{i}(x)\|^{2}_{2,\epsilon,\Omega_{i}} &\leq {C_s\epsilon}\left(\frac{e}{(2W+1)\epsilon}\right)^{2W+1}\left(\sum_{i=1}^{N}\|U_{i}(x)\|^{2}_{s,\epsilon,\Omega_{i}}\right).
\end{align}
Moreover, from (\ref{5.26}) we can write
\begin{align}\label{5.33}
\sum_{i=1}^{N}\|U_{i}(x)-\phi_{i}(x)\|^{2}_{2,\epsilon,\Omega_{i}} &\leq {C_s\epsilon}\left(\frac{e}{(2W+1)\epsilon}\right)^{2W+1}\left(\sum_{i=1}^{N}\|U_{i}(x)\|^{2}_{s,\epsilon,\Omega_{i}}\right).
\end{align}
Combining (\ref{5.32}) and (\ref{5.33}),
\begin{align}\label{5.34}
\sum_{i=1}^{N}\|U_{i}(x)-z_{i}(x)\|^{2}_{2,\epsilon,\Omega_{i}} &\leq {C_s\epsilon}\left(\frac{e}{(2W+1)\epsilon}\right)^{2W+1}\left(\sum_{i=1}^{N}\|U_{i}(x)\|^{2}_{s,\epsilon,\Omega_{i}}\right).
\end{align}
From the differentiability estimate (\ref{5.22}), we have
\begin{align*}%\label{5.35}
\|U_{i}(x)\|^{2}_{s,\epsilon,\Omega_{i}}\leq Cd^{2s} \left(e^se^{1/\epsilon}s!\right)^2.
\end{align*}
Using this in (\ref{5.34}), we get
\begin{align}\label{5.35}
\sum_{i=1}^{N}\|U_{i}(x)-z_{i}(x)\|^{2}_{2,\epsilon,\Omega_{i}} &\leq {C_sd^{2s}\epsilon}\left(\frac{e}{(2W+1)\epsilon}\right)^{2W+1}\left(e^se^{1/\epsilon}s!\right)^2.
\end{align}
Setting $W_0=W+\frac{1}{2}$ such that $W_0>\frac{e}{2\epsilon}$, choosing $s=W_0\gamma$, with $0<\gamma<1$ and using Stirling's
formula $n!\sim \sqrt{2\pi n}e^{-n}n^n$ in (\ref{5.35}), we get %Here, we have used the fact that $N=O(W)$ for the $hp$-version.
\begin{align*}
\sum_{i=1}^{N}\|U_{i}(x)-z_{i}(x)\|^{2}_{2,\epsilon,\Omega_{i}} &\leq Cd^{2\gamma W_0}\epsilon\left(\frac{e}{2W_0\epsilon}\right)^{2W_0}
\left(e^{2\gamma W_0}e^{2/\epsilon}2\pi\gamma W_0e^{-2\gamma W_0}(\gamma W_0)^{2\gamma W_0}\right)\\
&\leq C\epsilon\left(\frac{e}{2W_0\epsilon}\right)^{2W_0}\left(e^{2/\epsilon}2\pi\gamma W_0(d\gamma W_0)^{2\gamma W_0}\right).
\end{align*}
Since $W_0>\frac{e}{2\epsilon}$, therefore $\left(\frac{e}{2W_0\epsilon}\right)<1$. Choosing $1<W_0<\frac{1}{\epsilon\log W}$
and $\gamma$ such that $d\gamma<1$ there exist constants $C, b>0$ such that
\begin{align*}
\sum_{i=1}^{N}\|U_{i}(x)-z_{i}(x)\|^{2}_{2,\epsilon,\Omega_{i}} &\leq Ce^{-\frac{bW}{\log W}}.
\end{align*}
\end{proof}
We now analyze computational complexity of the proposed method for the $p$ and $hp$ versions examined in this article. It is
enough to consider computational complexity for the $hp-$version since the complexity of the $p-$version follows by taking $N=1$.
The computational complexity takes into account number of Degrees of freedom (DoF), cost of assembling the matrix $A^TA$,
preconditioning the matrix using the PCGM and solving the problem i.e., the time taken to solve $A^TAu=A^Tb$. At each iteration
of the PCGM we need to compute action of a matrix on a vector. Thus, the efficiency of computations depends on the ability to
compute $A^TAu$ efficiently and inexpensively for any vector $u$. It can be shown as in~\cite{TOM} that $A^TAu$ can be computed
cheaply without storing the mass and stiffness matrices.

Moreover, we must be able to construct an effective preconditioner for the matrix $A^TA$ so that the condition number of the
preconditioned system is small. This will allow us to compute $A^TAv$ efficiently for any vector $v$ using the PCGM. In Section
4, it has been shown the condition number of the preconditioned system is $O\left(\frac{1}{\epsilon^2}\right)$ for the $p$ and
$hp-$version of the method. The total number of DoF are $O(NW)$ and it costs $O(NW^3)$ operations to compute the matrix vector
product $A^TAu$ (because of the presence of second order derivatives in the least-squares formulation, we require polynomials
of degree less than or equal to $4W$ to evaluate integrals exactly via Gauss-Lobatto-Legendre (GLL) points). Now, it takes
approximately $O\left(\frac{1}{\epsilon}\log\left(\frac{1}{\mu}\right)\right)$ iterations of the PCGM to solve the problem
to a given tolerance $\mu$. Therefore, the overall complexity for the $hp-$version of the method is
$O\left(\frac{NW^3}{\epsilon}\log\left(\frac{1}{\mu}\right)\right)$ which is approximately $O(W^4)$ since $N$ is proportional
to $W$ for the $hp-$version.

\section{Numerical Results}\label{sec6}
In this section, we present numerical results for various one dimensional elliptic boundary layer problems. Both $p$, $hp$
refinements have been investigated for different values of the layer parameter $\epsilon$. Relative error is obtained in the
modified $H^{2}$ norm which is defined as
\begin{align}
\|E\|_{\mbox{rel}} =\frac{\|u_{\mbox{sem}}-u_{\mbox{exact}}\|_{2,\epsilon,\Omega}}{\|u_{\mbox{exact}}\|_{2,\epsilon,\Omega}}\times 100
\end{align}
where, $u_{\mbox{sem}}$ is the spectral element solution obtained from the solver and $u_{\mbox{exact}}$ is the exact solution
of the problem.

Throughout this section, $N$ denotes the number of elements and $W$ denotes the polynomial order used for approximation. For the
$p$-version, we have taken $N=1$ and increased polynomial order $W$ to achieve accuracy in all examples. For the $hp$-version,
we have used a parallel computer and the number of elements taken to be proportional to $W$. Moreover, each element is mapped
onto a single processor. In all plots of this Section the norm of the relative error $\|E\|_{\mbox{rel}}$ is plotted against the
polynomial order on a semi-log scale. Test cases are performed for the values of the boundary layer parameter $\epsilon$ ranging
between $\epsilon=0.1$ and $\epsilon=10^{-4}$ that are specified at appropriate places. The numerical results have been obtained
with FORTRAN-90 sequential and parallel codes and in case of parallel codes the library used for inter processor communication is
Message Passing Interface (MPI).

All computations are carried out on a distributed memory HPC cluster which consists of 6-nodes (1 master and 5 slaves) and each
node is a dual core Intel 276 Xeon 4116 processor with 20 cores on each processor. The memory available on each processor is 96GB
along with 24TB IB enables storage capacity.
\begin{rem}
Note that, in all examples, we have computed percentage relative error using the modified (parameter dependent) $H^2$ Sobolev norm
$\|\cdot\|_{2,\Omega,\epsilon}$ because the proposed method formulate the given problem as a standard least-squares minimization
problem which computes residuals in modified $H^2-$ norms (unlike the Galerkin approach, there is no weak formulation or splitting
of the problem as a system of two first order ODEs to reduce the order) so the best possible approximation error will be in the
modified Sobolev norm $\|\cdot\|_{2,\Omega,\epsilon}$.
\end{rem}
%=======================================================
\subsection{\textbf{Example 1: (Boundary layer at left end)}}
Consider the model elliptic boundary layer problem~\cite{SW} on $\Omega=(0,1)$
\begin{align}\label{ex1}
-\epsilon^{2}u_{xx}+u&=(1-\epsilon^{2})e^x-x\left(e+e^{-1/\epsilon}\right)-2(1-x), \quad x\in\Omega,\\
u(0)&=u(1)=0.
\end{align}
The exact solution for problem is
\begin{align*}
u(x)= e^{-x/\epsilon}+e^x-x\left(e+e^{-1/\epsilon}\right)-2(1-x).
\end{align*}
This problem has a boundary layer at the left end point $x=0$ as $\epsilon\rightarrow0$ (see Figure \ref{soln-ex1}).
\begin{figure}[ht]
\centering
\includegraphics[width=0.35\linewidth]{./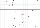}
\caption{Exact solution of model problem in Example 6.1 having boundary layer at $x=0$.}
\label{soln-ex1}
\end{figure}
Plots for the performance of the $p$ and $hp$-version are displayed in Figure \ref{ex1-error}. Figure \ref{ex1a} shows
the relative error in $H^{2}$ norm versus polynomial order for $p$ version for different values of $\epsilon$. For
$\epsilon=0.1$, relative error decays from $O(10^{1})$ to $O(10^{-5})$ by increasing polynomial order from $W=4$ to
$W=20$ over a single element and no improvement takes place by further increase in $W$. For $\epsilon=0.01$, the relative
error decays upto $O\left(10^{-4}\right)$ for $W=40$. On choosing $\epsilon=0.001$ and $\epsilon=0.0001$, the Figure shows
that the error saturates quickly being of order $O(10^{-1})$ as $W$ increases. Therefore, the $p-$version is not able to
resolve the boundary layer at smaller values of the parameter $\epsilon$ as expected.
\begin{figure}[ht]
\centering
\subfigure[Performance of the $p$-version]{
\includegraphics[width=0.45\linewidth]{./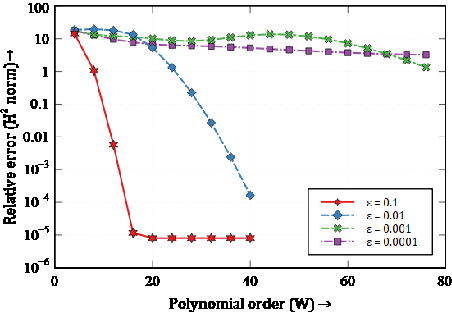}
\label{ex1a}
}
\subfigure[Performance of the $hp$-version]{
\includegraphics[width=0.45\linewidth]{./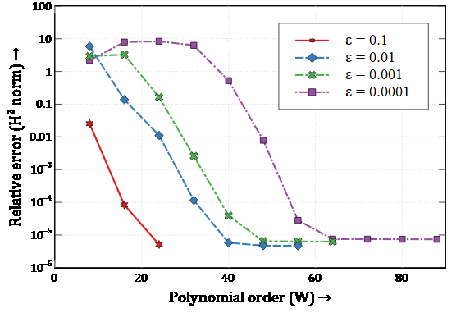}
\label{ex1b}
}
\caption{Comparison of the $p$ and $hp$ version for Example 6.1.}
\label{ex1-error}
\end{figure}

Asymptotically from Lemma \ref{lem5.3}, the $p$-version on a single element will have the best convergence rate for any fixed
$\epsilon$. However, this asymptotic convergence is not visible as seen in Figure \ref{ex1a}. Infact by Lemma \ref{lem5.3},
the error will be in the super-exponential (asymptotic) range for $W>\frac{e}{2\epsilon}$. However, this bound is possible to
achieve only for un-realistically large values of $W$ even for moderate $\epsilon$ (e.g., $W\approx 13591$ for $\epsilon=0.0001$).
Notice that, the graph in Figure \ref{ex1a} for $\epsilon=0.1$ is nearly a straight line for $W$ in this range which confirms
the theoretical estimate in Lemma 5.3.

For the case when $\epsilon=0.01$, we note that the graph for $\log \|E\|_{\mbox{rel}}$ is a parabolic curve for $W$ large enough.
This is due to the fact that this curve is plotted for $\epsilon=0.01$ in the preasymptotic range of $W$. In this case,
$\sqrt{\frac{3}{4\epsilon}}\leq W_0\leq \frac{2}{\epsilon}$, i.e., $5\leq W_0\leq 200$. It is clear from the graph that
the convergence is observed in the pre-asymptotic range as well, though we have not provided error bounds for this range
in our analysis. As $\epsilon$ decreases further, the error is seen to deteriorate.

Let us now consider the $hp$-version (i.e., the $p$-version on a variable mesh). Figure \ref{ex1b} shows the relative error
in $H^{2}$ norm versus polynomial order $W$ for $hp$-version for $\epsilon=0.1$ to $\epsilon=0.0001$. Clearly, the error decays
exponentially and obey the estimate in Theorem \ref{thm5.2} because the graph of $\log(\|E\|_{\mbox{\mbox{rel}}})$ against $W$ is almost
a straight line. For $\epsilon=0.001$, uniform $hp$ refinement brings the relative error down to $O(10^{-6})$, which is
significant good compared to the $p$-version for the same $\epsilon$. Thus, the $hp$-version performs significantly better
than the $p$-version.
%======================================================================
\subsection{\textbf{Example 2: (Boundary layer at right end)}}
Next, we consider a reaction-diffusion boundary layer problem
\begin{align}\label{ex2}
-\epsilon^{2}u_{xx}+u&=-\frac{x+1}{2}, \quad x\in\Omega=(-1,1),\\
u(1)&=u(-1)=0.
\end{align}
The exact solution to this problem is given by
\begin{align*}
u(x)= \frac{\sinh(x+1)/\epsilon}{\sinh(2/\epsilon)}-\frac{x+1}{2}.
\end{align*}
There is a boundary layer of width $O(\epsilon)$ at the outflow boundary $x=1$ as $\epsilon\rightarrow 0$ (see Figure \ref{soln-ex2}).
This example was also considered~\cite{CA,LS,SSX1}.
\begin{figure}[ht]
\centering
\includegraphics[width=0.35\linewidth]{./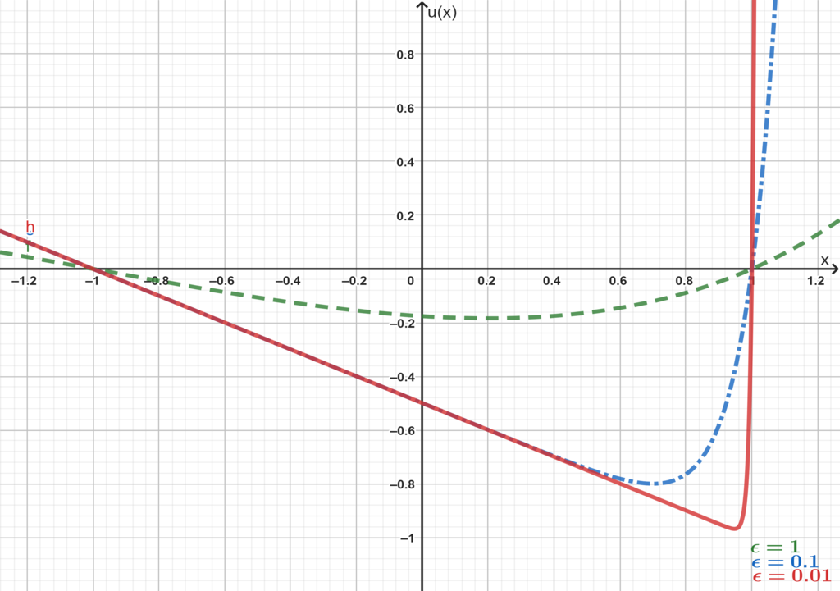}
\caption{Exact solution for Example 6.2 having a boundary layer at the outflow boundary $x=1$.}
\label{soln-ex2}
\end{figure}

Plots for simulation results are depicted in Figure \ref{ex2-error}. Figure \ref{ex2a} shows the relative error plotted against
polynomial order $W$ for the $p$-version. The error decreases to $O(10^{-5})$ with polynomial order $W=20$ and $\epsilon=0.1$.
Relative error reaches $O(10^{-2})$ for $\epsilon=0.01$ on choosing $W$ between $4$ to $40$. With $\epsilon=0.001$, relative
error deteriorates badly and reached only $O(10^{1})$ for $W=40$ and below. A very similar order of convergence was observed
in~\cite{SSX1}.
\begin{figure}[ht]
\centering
\subfigure[Performance of the $p$-version]{
\includegraphics[width=0.45\linewidth]{./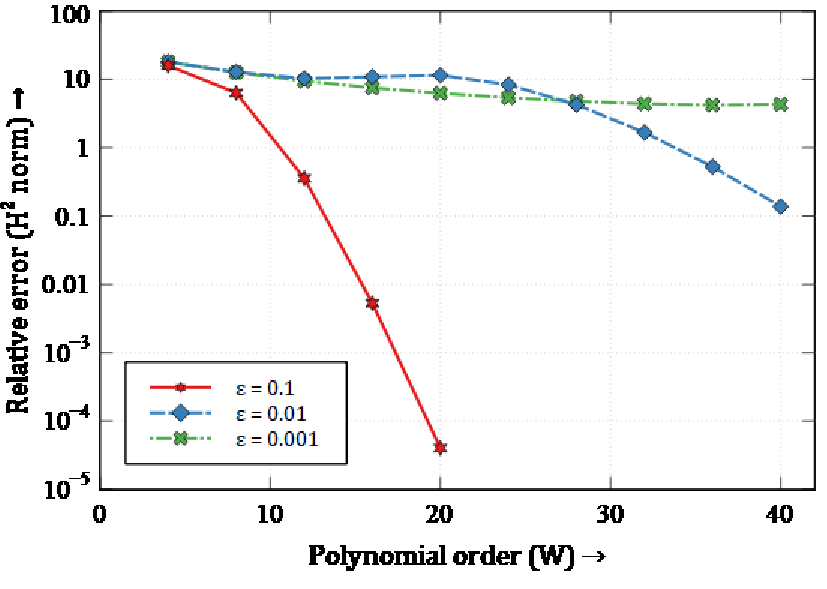}
\label{ex2a}
}
\subfigure[Performance of the $hp$-version]{
\includegraphics[width=0.45\linewidth]{./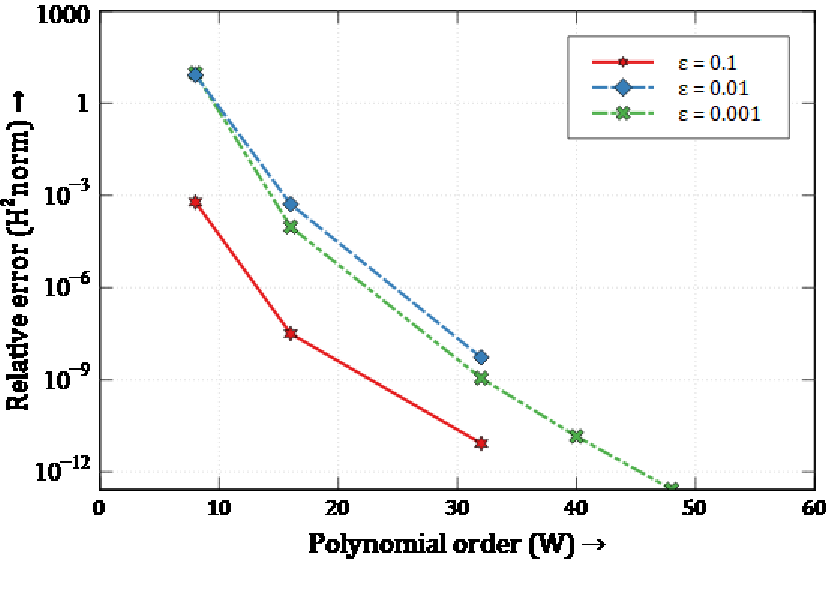}
\label{ex2b}
}
\caption{Comparison of the $p$ and $hp$ version for Example 6.2.}
\label{ex2-error}
\end{figure}

In Figure \ref{ex2b}, the relative error is plotted against $W$ for the $hp$-version. We notice that the error decays exponentially
and for $\epsilon=0.1$, the relative error is of order $O(10^{-12})$ with $W=32$ which is in agreement with our robust error estimate
in Theorem \ref{thm5.2}. For $\epsilon=0.01$, we can see that the error decays to nearly $O(10^{-9})$ at the cost of polynomial order
$W=32$ and the decay in error improves significantly down to $O(10^{-12})$ when $\epsilon=0.001$.
%------------------------------------------------
This remarkable accuracy and improvement in the convergence rate for smaller values of $\epsilon$ is better than the $hp$-FEM proposed
in~\cite{SSX1} and not possible to obtain with the methods based on a single element presented in~\cite{CA,LS}. Thus, our method seems
to outperform previous approaches based on $p$-FEM and $hp$-FEM with a fixed number of elements.

%==============================================================
\subsection{\textbf{Example 3: (Boundary layer at both end points)}}
%==============================================================
Consider the model boundary layer problem
\begin{align}\label{ex3}
-\epsilon^{2}u_{xx}+u&=1, \quad \mbox{in} \quad \Omega=(-1,1),\\
u(-1)&=u(1)=0.
\end{align}
Here, $0<\epsilon\leq 1$ is the boundary layer parameter. The exact solution of the problem is given by
\begin{align*}
u(x)=1-\frac{\cosh(x/\epsilon)}{\cosh(1/\epsilon)}.
\end{align*}
There are two boundary layers near the end points $x=1$ and $x=-1$ (see Figure \ref{soln-ex3} below).
\begin{figure}[ht]
\centering
\includegraphics[width=0.35\linewidth]{./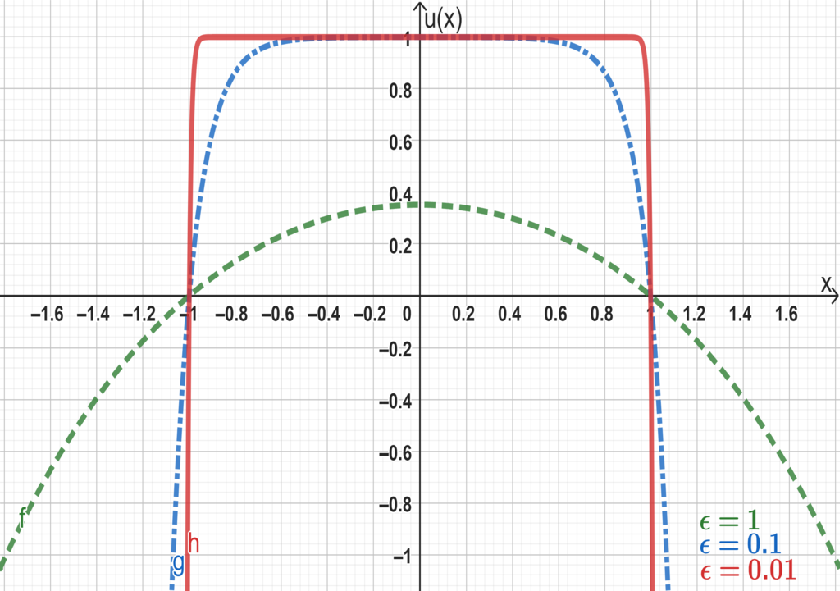}
\caption{Exact solution for Example 6.3 having layers at both ends of the domain.}
\label{soln-ex3}
\end{figure}

This example was extensively examined in~\cite{SCHW1,XENO1} using $h$, $p$, $hp$ and $rp$ methods in the FEM framework
with uniform and non-uniform meshes. Figure \ref{ex3-error} displays relative error with different values of $\epsilon$
using the proposed methods. Figure \ref{ex3a} shows relative error for the $p$-version on a single element and with
variable polynomial order $W$ and Figure \ref{ex3b} shows relative error for the $hp$-version with simultaneous varying
polynomial order and decreasing the mesh size.
%------------------------------------------------------------
\begin{figure}[ht]
\centering
\subfigure[Performance of the $p$-version]{
\includegraphics[width=0.45\linewidth]{./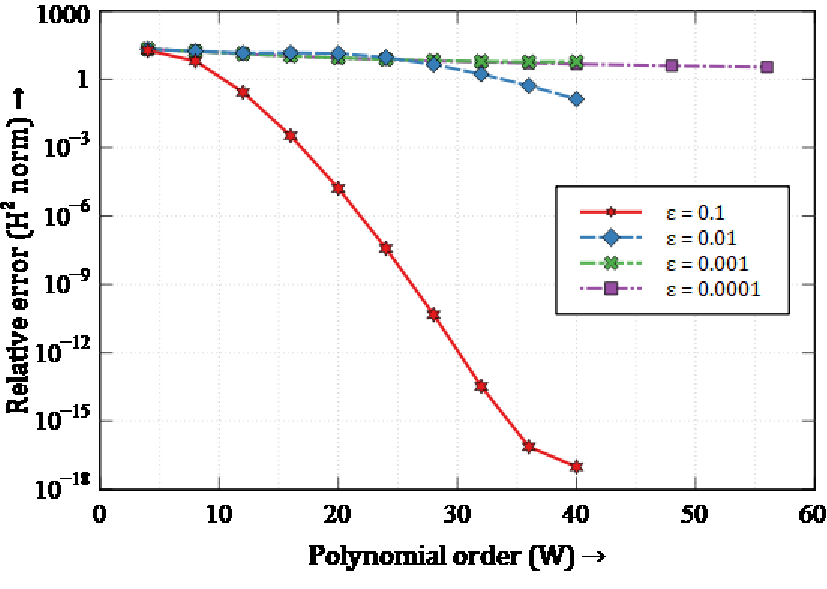}
\label{ex3a}
}
\subfigure[Performance of the $hp$-version]{
\includegraphics[width=0.45\linewidth]{./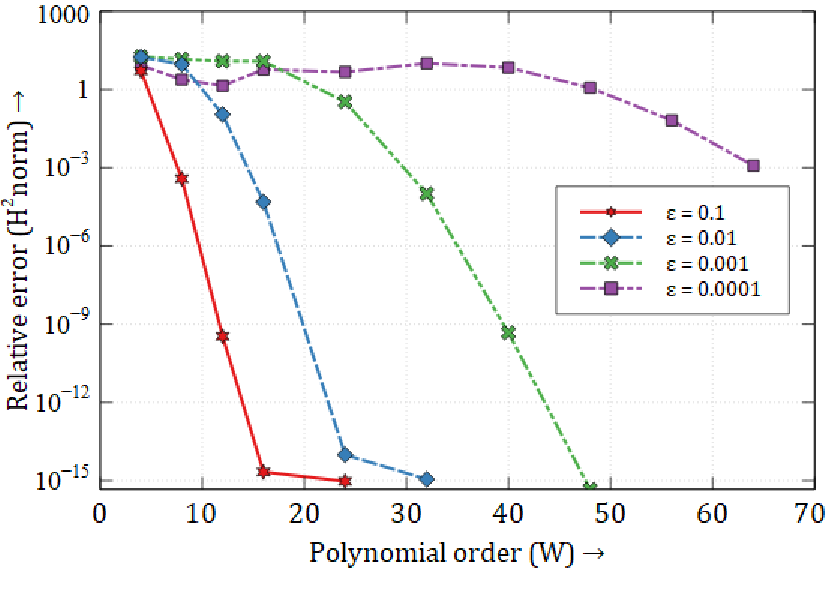}
\label{ex3b}
}
\caption{Comparison of the $p$ and $hp$ version for Example 6.3.}
\label{ex3-error}
\end{figure}
%-----------------------------------------------------
First we discuss the performance of the $p$-version. For $\epsilon=0.1$, increasing polynomial order from $W=4$ to $W=40$,
we see that the relative error decays super-exponentially and reaches approximately $O(10^{-17})$ as predicted by the Lemma
\ref{lem5.3}. In~\cite{SCHW1,XENO1} a much less convergence rate was observed with a comparable number of degrees of freedom
using the $p$-version FEM on a single element. For $\epsilon=0.01$, we observe that the relative error remains almost constant
of order of $O(10^{-2})$ with $W=40$ and deteriorates further when $\epsilon$ is 0.001 and 0.0001 and the error graphs overlap
each other for the later two values of epsilon. Thus, the $p$-version is not able to resolve the boundary layer at all for
small $\epsilon$ values.

Therefore, we tested the problem for the $hp$-version and from Figure \ref{ex3b} it is clear that the $hp$-version delivers
exponential convergence rate with less than half the polynomial order than that used by the $p$-version for $\epsilon=0.1$.
Decreasing $\epsilon$ further, we see that for $\epsilon=0.01, 0.001$ $hp$-version still delivers exponential accuracy with
fewer degrees of freedom as compared with the $hp$-FEM in~\cite{SCHW1,XENO1}.
%=============================================================
\subsection{\textbf{Example 4: (Boundary layer at right end point for convection-diffusion problem)}}
In our last example, we consider a model convection-diffusion problem as follows:
\begin{align}\label{ex4}
-\epsilon^{2} u_{xx}+u_{x}&=f \quad \mbox{in} \quad \Omega=(-1,1),\\
u(-1)&=u(1)=0.
\end{align}
The exact solution of this problem is
\begin{align*}
u(x)= \frac{e^{\frac{x+1}{\epsilon}}-1}{e^{\frac{2}{\epsilon}}-1}-\frac{x+1}{2}.
\end{align*}
The problem has a boundary layer at the right boundary $x=1$ as $\epsilon\rightarrow0$~\cite{SSX1} (see also Figure \ref{soln-ex4}).
%-------------------------------------------------------
\begin{figure}[ht]
\centering
\includegraphics[width=0.35\linewidth]{./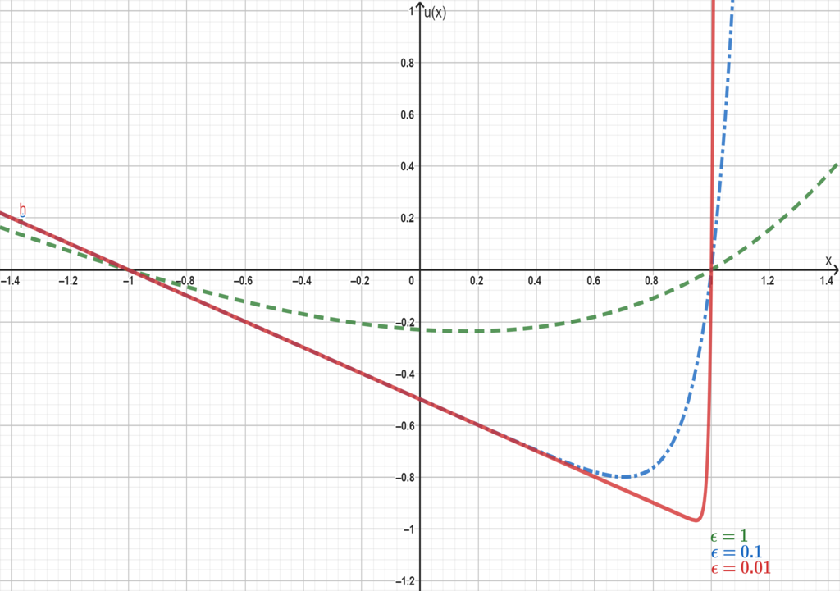}
\caption{Exact solution for Example 6.4 having boundary layer at right end.}
\label{soln-ex4}
\end{figure}
%------------------------------------------------------
Figure \ref{ex4-error} presents graphs for relative errors with different values of $\epsilon$ versus the polynomial orders.
Figure \ref{ex4a} displays relative error for the $p$-version on a single element and \ref{ex4b} shows relative error for the
$hp$-version with a variable mesh.

For the $p$-version with $\epsilon=0.1$, relative error decreases upto $O(10^{-18})$ at the cost of polynomial order $W=40$.
When $\epsilon=0.01$, then the errors decays upto $O(10^{-6})$ for $W=64$. Further lower values of $\epsilon$ doesn't improve
the performance of the $p$-version since the second order derivatives are becoming more singular.

For the $p$-version we observe accuracy of the order of $O(10^{-12})$ for polynomial orders $W=32$ and $W=40$ corresponding
to $\epsilon=0.1$ and $\epsilon=0.01$ respectively. If we decrease layer parameter further to $\epsilon=0.001$ the $hp$
version still recovers exponential accuracy with a polynomial order $W=48$.
%------------------------------------------------------------
\begin{figure}[ht]
\centering
\subfigure[Performance of the $p$-version]{
\includegraphics[width=0.45\linewidth]{./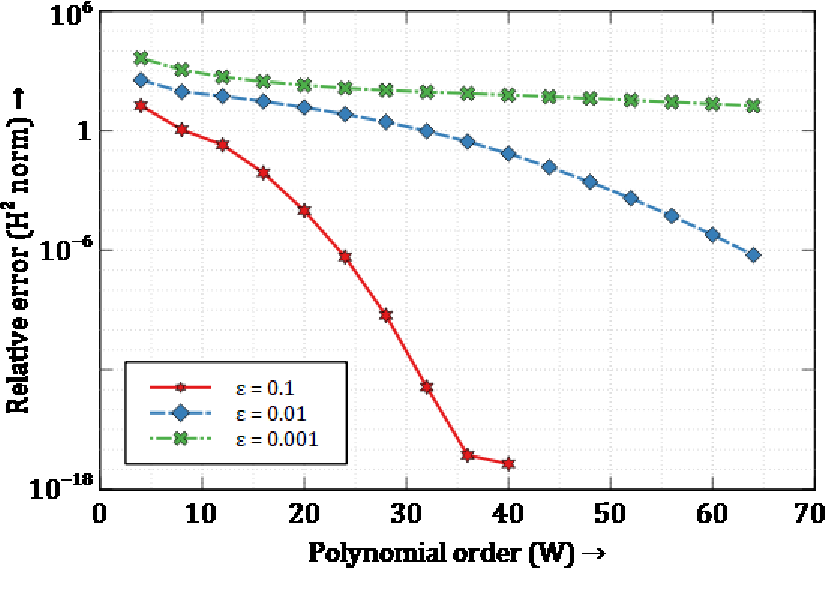}
\label{ex4a}
}
\subfigure[Performance of the $hp$-version]{
\includegraphics[width=0.45\linewidth]{./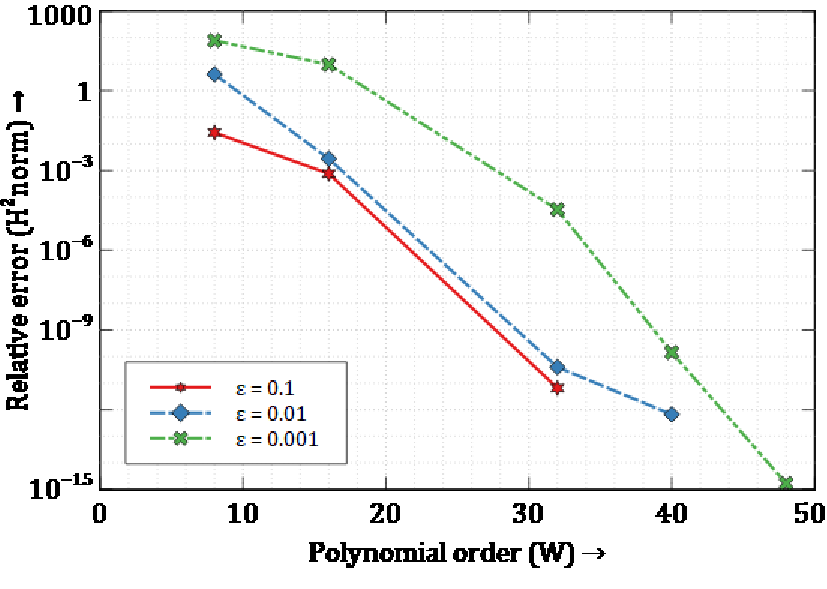}
\label{ex4b}
}
\caption{Comparison of the $p$ and $hp$ version for Example 6.4.}
\label{ex4-error}
\end{figure}
%-----------------------------------------------------

In summary, the errors obtained from the $hp$-version in all examples are found to be consistently the smallest
as compared to those obtained by the $p$-version. This confirms that the $hp$-version is extremely robust and
efficient even for very small values of the boundary layer parameter $\epsilon$ because we recovered relative
errors in the range of order $10^{-15}$ to $10^{-18}$ with polynomial orders just approximately 48 in the worst
case scenario. We remark that although we have presented relative errors in the $H^2$-norm, the relative errors
in the energy norm and pointwise errors will behave similarly. Our numerical using the $p$ and $hp$ methods are
comparable very closely with those obtained in~\cite{CA,LS,SCHW1,SSX1,XENO1} and references therein for same set
of problems.

\section{Conclusions}\label{sec7}
In this article, we presented a parameter robust, exponentially accurate least-square spectral element (LSQ-SEM)
method for one-dimensional elliptic boundary layer problems using $p$ and $hp$ spectral element methods. We derived
stability estimates for non-conforming spectral elements and designed numerical schemes which are able to resolve the
boundary layers uniformly in $\epsilon$ at a robust rate which is of $O\left(\frac{\sqrt{\log W}}{W}\right)$ for the
$p$-version, where $W$ denotes the polynomial order. For the $hp$-version of the method we proved exponential decay
in the error at a rate which is $O\left(e^{-{W}/{\log W}}\right)$. Our estimates and results are supplemented
with extensive numerical computations to validate the theory. Numerical results show good agreement with convergence
results with a few degrees of freedom even when $\epsilon$ is as small as $10^{-4}$. The overall complexity of the
method for the $hp-$version to obtain solution for a given tolerance $\mu$ is $O\left(\frac{NW^3}{\epsilon}\log
\left(\frac{1}{\mu}\right)\right)\approx O(W^4)$ since $N\propto W$ for the $hp-$version.

For the $p$-version we observed super-exponential convergence when $W>\frac{e}{2\epsilon}$. The method is able
to resolve boundary layers for elliptic problems with layers at both ends and reaction-diffusion problems with
layer at one end. The method is superior to existing methods including lower order $h$ and $p$ version of FEM
with optimal `exponential' mesh refinement. A Jacobi type preconditioner can also be used for the PCGM for ease
of implementation.

We examined a simple boundary layer problem, however, the theory presented here is applicable to more general
situations. In future, we plan to extend this theory to other important discretizations in order to design
numerical schemes using spectral element methods which will resolve boundary layers at a fully robust exponential
rate independent of the boundary layer parameter.
It has been proved~\cite{M1,M2,MS3,SCHW1,SSX1,XENO1,XENO2} that this is possible to achieve by adding only two
extra element of size $O(p\epsilon)$ near the boundary layers in the framework of finite element methods using
a boundary layer mesh called the $rp$-mesh. We plan to consider similar mesh refinements techniques combined
with geometric, $p$, and $hp$ refinements to give a complete analysis and resolution of boundary layers using
spectral element methods. Problems with variable coefficients, discontinuous coefficients and problems containing
interior layers will also be addressed.

\section*{Data Availability Statement}
Data sharing is not applicable to this article as no new data were created or analyzed in this study.

\section*{Conflict of Interest}
The authors declare there is no conflict of interest.

\section*{Author Contributions}
\textbf{Akhlaq Husain:} Conceptualization, Methodology, Supervision, Writing- Original draft preparation.
\textbf{Aliya Kazmi:} Visualization, Investigation, Software, Validation, Writing- Original draft preparation.
\textbf{Subhashree Mohapatra:} Investigation, Software, Writing- Reviewing and Editing.
%\textbf{Mohammad Sajid:} Validation, Supervision, Writing- Reviewing and Editing.
\textbf{Ziya Uddin:} Supervision.

\section*{Declaration of AI use}
We have not used AI-assisted technologies in creating this article.

\end{document}